\documentclass[12pt]{article}
\usepackage[left=1in,top=1in,right=1in,bottom=1in,nohead]{geometry}

\usepackage{url,subfig,latexsym,amsmath,amssymb, amsfonts,enumerate,
  epsfig, graphics,times, amsthm,mathtools}
\usepackage[inline]{enumitem}
\usepackage{color}
\usepackage{nameref}
\usepackage{bbm}
\usepackage[sort&compress,numbers]{natbib}
\usepackage{tikz}      
\usetikzlibrary{shapes,automata,positioning,arrows,fit}
\usepackage[colorlinks]{hyperref}

\newtheorem{thm}{Theorem}[section]

\newtheorem{lem}[thm]{Lemma}

\newtheorem*{thm*}{Theorem}

\theoremstyle{definition}
\newtheorem{defn}{Definition}[section]

\newtheorem{example}{Example}[section]


\newcommand{\lemref}[1]{Lemma~\ref{#1}}


\newcommand{\dsum}{\displaystyle \sum\limits}

\newcommand{\dprod}{\displaystyle \prod\limits}

\def\S{\mathcal S}
\def\C{\mathcal C}
\def\Re{\mathcal R}
\def\R{\mathbb R}

\def\Z{\mathbb Z}

\def\Td1{T^{D,1}_{\{x_n\}}}
\def\Ts1{T^{S,1}_{\{x_n\}}}

\def\Tsinf{T^{S,\infty}_{\{x_n\}}}

\def\lmt{\lim_{n\rightarrow \infty}}

\def\Tid1{T^{D,1}_{\{x_n+\zeta_i\}}}
\def\Tis1{T^{S,1}_{\{x_n+\zeta_i\}}}
\def\Tjs1{T^{S,1}_{\{x_n+\zeta_j\}}}
\def\Tjd1{T^{D,1}_{\{x_n+\zeta_j\}}}

\newcommand{\state}{\mathbb{S}_{x_0}}

\newcommand{\reply}[1]{{#1}}

\newcount\colveccount
\newcommand*\colvec[1]{
        \global\colveccount#1
        \begin{pmatrix}
        \colvecnext
}
\def\colvecnext#1{
        #1
        \global\advance\colveccount-1
        \ifnum\colveccount>0
                \\
                \expandafter\colvecnext
        \else
                \end{pmatrix}
        \fi
}

\title{Stochastically modeled weakly reversible  reaction networks with a single linkage class} 

\author{
David F. Anderson\thanks{Department of Mathematics, University of
  Wisconsin, Madison, USA.  anderson@math.wisc.edu},
\and 
Daniele Cappelletti \thanks{Department of Biosystems Science and Engineering, ETH Zurich, Switzerland. daniele.cappelletti@bsse.ethz.ch}\and
Jinsu Kim\thanks{Department of Mathematics, University of
  California, Irvine, USA.  jinsu.kim@uci.edu}
  }

\begin{document}

 \tikzset{every node/.style={auto}}
 \tikzset{every state/.style={rectangle, minimum size=0pt, draw=none, font=\normalsize}}
 \tikzset{bend angle=7}

\maketitle

\begin{abstract}
\reply{It has been known for nearly a decade that deterministically modeled reaction networks that are weakly reversible and consist of a single linkage class have trajectories that are bounded from both above and below by positive constants (so long as the initial condition has strictly positive components).}
It is conjectured that the stochastically modeled analogs of these systems are positive recurrent.  We prove this conjecture in the affirmative under the following additional assumptions:  (i) the system is binary and (ii) for each species, there is a complex (vertex in the associated reaction diagram) that is a multiple of that species. To show this result, a new proof technique is developed in which we study the recurrence properties of the $n-$step embedded discrete time Markov chain. 
\end{abstract}

\section{Introduction}

Reaction networks are directed graphs that can be used to describe the general dynamics of population models.  Their use is widespread in population ecology, chemistry, and cell biology. An example of reaction network is
\begin{equation}\label{eq:example}
  \begin{tikzpicture}[baseline={(current bounding box.center)}, scale=0.8]
   \node[state] (1) at (0,2)  {$A$};
   \node[state] (2) at (3,2)  {$B+C$};
   \node[state] (3) at (6,2)  {$0$};
   \node[state] (4) at (4.5,0)  {$B$};
   \node[state] (5) at (1.5,0)  {$2C$};
   \path[->]
    (1) edge[bend left] node {} (5)
        edge node {} (2)
    (2) edge[bend left] node {} (3)
    (3) edge[bend left] node {} (2)
        edge node {} (4)
    (4) edge[bend left] node {} (5)
    (5) edge[bend left] node {} (4)
        edge[bend left] node {} (1);
  \end{tikzpicture}
 \end{equation}
where the possible transformations of \emph{species} $A$, $B$, and $C$ are depicted: in the reaction $A\to B+C$, a single individual of species $A$ can be transformed into one individual of species $B$ and one individual of species $C$. In the reaction $B+C\to 0$, individuals of species $B$ and $C$ can annihilate each other. In the opposite reaction $0 \to B+C$, one individual of $B$ and one individual of $C$ are introduced in the system at the same time, and so on. Typically, in biochemistry the species are thought of as different chemical components that react with each other. The nodes of the graph are termed \emph{complexes}. Different mathematical models can be associated with such a network in order to describe the dynamics of the system of interest. Regardless of the modeling choice, a natural question is the following: when is it possible to prove theorems that relate the structure of the reaction diagrams, which are developed by domain scientists, to general dynamical properties of the associated mathematical models? The subfield of mathematics that works on this question is often termed ``chemical reaction network theory'', and has a history dating back to at least the early 1970s \cite{Feinberg72, Horn72, HornJack72}.  In particular, there has been an enormous amount of work relating the possible dynamics of the associated deterministic mass-action system with the basic structure of the reaction network itself; see the following for just a subset \cite{Craciun2006_PNAS,AndGAC_ONE2011,GMS13b, CDSS09, CW:intermediates, complex_invariants}. 

There has also been work in this sense for stochastic dynamics, which are typically modeled through a continuous time Markov chain as described in Section \ref{sec:basic}. 
This paper is concerned with providing structural conditions on the reaction network that guarantee the associated continuous time Markov chain is positive recurrent. This information is precious for practical purposes: as an example, in biological models it is often the case that different chemical species evolve over different time scales. It is known that under certain assumptions, the dynamics of the slower species can be approximated by averaging out the dynamics of the faster subsystem, which is considered to be at stationary regime \cite{KangKurtz2013,PP:scaling,Ball06}. It is therefore important to understand whether the fast subsystem admits a stationary regime in the first place. Once the existence of the stationary distribution is proven, this can be approximated by simulations, or by finite state-space projections \cite{MK:finite, GMK:finite, AE2019}. Another example concerns synthetic biology: in this field, biological circuits are designed and implemented in order to control cellular behavior. Often, it is rigorously proven that such circuits serve the desired purpose provided that a stationary regime can be reached \cite{BGK:antithetic, BGK:antithetic_variance}. Finally, from a mathematical standpoint it is  inherently interesting to understand the recurrence properties of this large class of  models, which are used ubiquitously in the science literature.


Despite the importance of understanding whether a stationary regime exists, checking for positive recurrence in the setting of reaction networks is usually a hard task, with the exception of a few classes of models for which positive recurrence has been shown \cite{AndProdForm, AC:product, CW:product, JH:mono, AK2018, ACKK2018}. Probably the most important open conjecture related to stochastic models in the field is the following: if the reaction network is \emph{weakly reversible} (i.e., if each connected component of the reaction graph is strongly connected; see Definition \ref{def:WR}), then the associated Markov model is positive recurrent.  There is a large amount of evidence that this conjecture is true.  \reply{In particular, in \cite{AndProdForm} it was shown that if the deterministic model associated with a particular network is \textit{complex balanced} (meaning that at equilibrium the total flux into each complex is equal to the total flux out), then the stochastic model associated with that same network admits a stationary distribution that is a product of Poissons.}
Followup work in \cite{ACKK2018} showed that these models are non-explosive, implying they are positive recurrent.  A key fact is that for \emph{every} weakly reversible reaction network, there is a choice of rate constants that makes the model complex balanced.  Hence, if there is a weakly reversible model that is not positive recurrent, the lack of recurrence must be a property of the specific choice of rate constants, and not of the network structure, something most researchers find unlikely.  


In this paper, we do not consider the full class of weakly reversible networks, but instead restrict ourselves to those networks that are weakly reversible and have a single connected component (termed a ``linkage class'' in the literature).  This particular set of networks was first  studied in the deterministic context in \cite{AndBounded_ONE2011,AndGAC_ONE2011}, where it was shown that 
they necessarily have bounded, persistent trajectories. That is, for each trajectory $x(t)\in \R^d_{\ge 0}$, the expression $\sum_{i=1}^d |\ln(x_i(t))|$ is uniformly bounded in time. This subclass of models was studied in the stochastic context in \cite{AK2018}, where positive recurrence was proven under a few additional assumptions on the network, which we will not list here due to their technical nature.  

We prove in Theorem \ref{thm:main} that models that are weakly reversible and consist of a single linkage class are positive recurrent under two additional structural assumptions: 
(i) the system is \emph{binary} \reply{(see Definition \ref{def:binary})} and (ii) for each species, there is a complex that is a multiple of that species. 
 As an example, these assumptions are met in \eqref{eq:example}.

Providing a new class of models for which positive recurrence is guaranteed is the first major contribution of this paper.  The second is the method of proof, which we believe will be useful in the study of a significantly wider class of models.  In particular, most previous results related to positive recurrence of stochastically modeled reaction networks that the authors are aware of utilized the Foster-Lyapunov criteria related to the continuous time Markov chain itself.  \reply{In this article, we prove positive recurrence of the continuous time chain by studying the recurrence properties of the discrete time  Markov chain that arises by observing the continuous time Markov chain after a fixed number of jumps.}

This paper has the following outline. In Section \ref{sec:notation}, we briefly provide a catalog of notation we use throughout this paper. In Section \ref{sec:basic}, we provide the basic mathematical objects we will study. 
In Section \ref{sec:main_result} we state our main result and in Section \ref{sec:tiers} we introduce the concept of tier sequences, which is fundamental to show our main result. The connection between tiers and positive recurrence is detailed in Section~\ref{sec:positive_rec}, at the end of which the proof of the main structural result is given. 

\section{Notation}
\label{sec:notation}

\reply{We denote by $\R$, $\R_{>0}$, and $\R_{\ge 0}$ the real, positive, and non-negative real numbers, respectively. Similarly, we denote by $\Z$, $\Z_{>0}$, and $\Z_{\ge 0}$ the integers, positive integers, and non-negative integers.}  Any sum of the form $\sum_{i = 1}^0 a_i$ is taken to be zero, as is usual.

Given two vectors $x,z\in\R^d$, we write $x\geq y$ if $x_i\geq z_i$ for all $1\leq i\leq d$. \reply{When $x,y \in \R^d_{\ge0}$, we denote by $x^y$ the vector whose $i$th component is given by $x_i^{y_i}$, with the usual convention that $0^0$ is taken to be  1.  Moreover, we denote by $x\vee 1$ the vector in $\R_{>0}^d$ whose $i$th component is $\max\{x_i, 1\}$.}

 Given a stochastic process $\{X(t):t\in\R_{\geq0}\}$ with state space $\Gamma$, and $x\in\Gamma$, we denote by
\begin{equation}\label{eq:Px}
    P_x(X(t)\in A)=P(X(t)\in A | X(0)=x)\quad\text{and}\quad E_x[f(X(t))]=E[f(X(t))|X(0)=x]
\end{equation}
for all measurable sets $A$, all measurable functions $f\colon\Gamma\to\R$ (with respect to the $\sigma-$algebra generated by $X$), and all $t\in\R_{>0}$.

We assume basic knowledge of Markov chain theory. In particular, we will not define what the generator of a Markov chain is, what it means for a Markov chain to have a stationary distribution, and what it means for a Markov chain to be positive recurrent, transient, or explosive. These concepts can be read from \cite{Kurtz86, NorrisMC97}. We will also assume the Foster-Lyapunov function criterion for continuous time and discrete time Markov chains is known to the reader \cite{MT-LyaFosterIII}.

\section{Reaction networks and stochastic dynamics}\label{sec:basic}
In Section \ref{sec:reactionnetworks}, we formally introduce reaction networks.  In Section \ref{sec:dynamics}, we introduce the associated stochastic models.  \reply{In Section \ref{sec:embedded}, we introduce the embedded discrete time Markov chain for our model.}

\subsection{Reaction networks}
\label{sec:reactionnetworks}

A reaction network describes the set of possible interactions among constituent chemical species, and is defined as follows. 

\begin{defn}\label{def:21}
\emph{A  reaction network} is given by a triple of finite sets $(\S,\C,\Re)$ where:
\begin{enumerate}[label=(\roman*)]
\item The set $\S=\{S_1,S_2,\cdots,S_d\}$ is a set of $d$ symbols, called the \emph{species} of the network;
\item The set $\C$ is a set of linear combinations of species on $\Z_{\geq 0}$, called \emph{complexes};
\item The set $\Re$ is a \reply{non-empty} subset of $\C\times\C$, \reply{called \emph{reactions}}, whose elements $(y,y')$ are denoted by $y\to y'$. The cardinality of $\Re$ is denoted by $r$.
\end{enumerate}
\end{defn}
Given a reaction $y\to y'\in\Re$, the complexes $y$ and $y'$ are termed the \emph{source} and \emph{product} complex of the reaction, respectively. It is common to assume that $y\to y\notin\Re$ for all $y\in\C$, and we will do so in the present paper.
Throughout, we abuse notation slightly by letting a complex $y\in\C$ denote both a linear combination of the form
$$y=\sum_{i=1}^d y_iS_i$$
as defined in Definition~\ref{def:21}, and the vector whose $i$th component is $y_i$,  i.e.~$y=(y_1,y_2,\cdots,y_d)^T \in \mathbb{Z}^d_{\ge 0}$. For example, if $y=2S_1+S_2$ then $y$ will also be used to denote the vector $(2,1,0,0,\dots,0) \in \Z^d_{\ge 0}$. We denote by $0$  the complex $y$ for which $y_i=0$ for each $i$.

The directed graph whose nodes are $\C$ and whose \reply{directed} edges are $\Re$ can be associated to a reaction network. Such a graph is called the \emph{reaction graph} of the reaction network.

\begin{defn}\label{def:WR}
Let $(\S,\C,\Re)$ be a reaction network.  The connected components of the associated reaction graph are termed \emph{linkage classes}.  We say that a linkage class is \emph{weakly reversible} if it is strongly connected, i.e., if for any two complexes $y_1,y_2$ in the linkage class there is a directed path from $y_1$ to $y_2$ and a directed path from $y_2$ to $y_1$. 

If all linkage classes in a reaction network are weakly reversible, then the reaction network is said to be weakly reversible. 
\end{defn}

\begin{example}\label{ex11}
Consider the reaction network with the following reaction graph:
\begin{equation*}
   \begin{tikzpicture}[baseline={(current bounding box.center)}, scale=0.8]
   \node[state] (1) at (0,2)  {$2A$};
   \node[state] (2) at (2,2)  {$B$};
   \node[state] (3) at (5,2)  {$A+C$};
   \node[state] (4) at (7,2)  {$0$};
   \node[state] (5) at (9,2)  {$2B$};
   \node[state] (6) at (11,2)  {$A+B$};
   \node[state] (7) at (14,2)  {$2C$};
   \node[state] (8) at (12.5,0)  {$D$};
   \path[->]
    (1) edge node {} (2)
    (2) edge[bend left] node {} (3)
    (3) edge[bend left] node {} (2)
    (4) edge node {} (5)
    (6) edge node {} (7)
    (7) edge node {} (8)
    (8) edge node {} (6);
  \end{tikzpicture}
\end{equation*}
This network has three linkage classes.  The right-most linkage class is weakly reversible, whereas the other two are not.  Because not all linkage classes are weakly reversible, the network is not weakly reversible.
\hfill $\triangle$
\end{example}

The main result of this paper pertains to binary reaction networks. 

\begin{defn}\label{def:binary}
A reaction network $(\S,\C,\Re)$ is called \emph{binary}  if $\sum_{i=1}^d y_i \le 2$ for all $y \in \C$.
\end{defn}
\reply{Most networks found in the science literature are binary.}

\subsection{Stochastic model}
\label{sec:dynamics}

In this section we introduce a stochastic dynamical model associated with reaction networks, which is often utilized in biochemistry if few molecules of the different species are present and stochastic fluctuations cannot be ignored.
First, we associate with each reaction $y\to y'$ a \emph{rate function} $\lambda_{y\to y'}\colon \Z^d\to \R_{\geq0}$, expressing the propensity of a reaction to occur. We further assume that a reaction $y\to y'$ cannot take place if not enough reacting molecules are present, which is expressed by the condition $\lambda_{y\to y'}(x)>0$ only if $x\geq y$. A popular choice of rate functions is given by \emph{mass action kinetics}, where it is assumed that the propensity of a reaction to occur is proportional to the number of combinations of the present molecules that can give rise to the reaction. Specifically, mass action rate functions take the form
\begin{align}\label{mass}
\lambda_{y\rightarrow y'}(x)= \kappa_{y\rightarrow y'} \lambda_y(x), \quad\text{where}\quad \lambda_y(x)=\prod_{i=1}^d \frac{x_i !}{(x_i-y_{i})!}\mathbf{1}_{\{x_i \ge y_{i}\}},
\end{align}
for some positive constants $\kappa_{y\to y'}$ called \emph{rate constants}, \reply{and where $\mathbf{1}_{\{x_i \ge y_{i}\}}$ is the indicator function on the event $\{x_i \ge y_i\}$}. A reaction network together with a choice of rate constants $(\S, \C, \Re, \kappa)$ is called a \emph{mass action system}. We say that a mass action system is weakly reversible, binary, or has a single linkage class if the related reaction network does.

For every $t\geq0$, we denote by $X(t)$ a vector in $\Z^d_{\geq0}$ whose $i$th entry represents the \reply{count} of species $S_i$ at time $t$. We assume that $X$ is a continuous time Markov chain over $\Z^d_{\geq0}$, with transition rate from state $x\in \Z^d_{\geq0}$ to state $z \in \Z^d_{\geq0}$ given by
\begin{equation}\label{eq:q}
    q(x, z)=\sum_{\substack{y\to y'\in \Re\\ y'-y=z-x}}\lambda_{y\to y'}(x).
\end{equation}
The generator $\mathcal{A}$ of the process $X$ is \reply{the operator whose action on a function $V:\Z^d_{\ge 0} \to \R$ is given by \cite{Kurtz86}}
\begin{align}\label{gen5}
 \mathcal{A}V(x) = \sum_{y \rightarrow y' \in \Re} \lambda_{y\rightarrow y'}(x)(V(x+y'-y)-V(x)).
\end{align}
Moreover, assume $X(0)=x_0$. We then define the set of \emph{reachable states} as
$$\state=\{x\in\Z^d_{\geq0}\,:\,P_{x_0}(X(t)=x)>0\quad\text{for some }t\geq0\},$$
where $P_{x_0}$ is as in \eqref{eq:Px}.

\subsection{The embedded discrete time Markov chain}
\label{sec:embedded}

Let the continuous time Markov chain $X$ be as defined in the previous section. In the present paper, we will utilize the embedded discrete time Markov chain of $X$, defined as follows \cite{NorrisMC97}.
\begin{defn}\label{defn:embedded}
Let $\tau_0=0$ and for $n \in \Z_{>0}$ let $\tau_n$ be the time of the $n$th jump of $X$. Then the \emph{embedded discrete time Markov chain of $X$} is the discrete time Markov chain $\{\tilde{X}_n\}_{n=0}^\infty$ defined by $\tilde X_n=X(\tau_n)$ for all $n\in\Z_{\geq0}$. 
\end{defn}

It is straightforward to show that the transition probabilities of the embedded discrete-time Markov chain $\tilde{X}$ are given by
\begin{equation}\label{eq:transition prob1}
P(\tilde{X}_1=z|\tilde{X}_0=x)=\frac{q(x,z)}{\overline{\lambda}(x)},
\end{equation} 
where $q$ is as in \eqref{eq:q} and
\begin{equation}\label{lambdabar}
    \overline{\lambda}(x)=\sum_{y\to y'\in \Re}\lambda_{y\to y'}(x).
    \end{equation}

The following result, which can be inferred from  \cite[Theorem 3.5.1]{NorrisMC97}, will be used.

\begin{thm} \label{thm : relation between conti and disc}
 Suppose that $\inf_{x\in \state}\overline{\lambda}(x) > 0$ and that $\tilde{X}$ restricted to $\state$ is positive recurrent. Then, $X$ restricted to $\state$ is non-explosive and positive recurrent. 
\end{thm}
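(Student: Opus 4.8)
The plan is to prove Theorem~\ref{thm : relation between conti and disc} by connecting the long-run behavior of the continuous-time chain $X$ to that of its embedded discrete-time chain $\tilde{X}$ via the holding times. Recall that the continuous-time chain $X$ can be constructed from the embedded chain $\tilde{X}$ together with an independent sequence of holding times: conditioned on $\tilde{X}_n = x$, the $n$th holding time $\tau_{n+1}-\tau_n$ is exponentially distributed with parameter $\overline{\lambda}(x)$. The central idea is that positive recurrence of $\tilde{X}$ gives us control over the discrete-time return times, while the hypothesis $\inf_{x\in\state}\overline{\lambda}(x)>0$ bounds the holding times from above in distribution, preventing the process from spending too long at any single state and, more importantly, preventing explosion.

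First I would establish non-explosion. Explosion means $\tau_\infty := \lim_{n\to\infty}\tau_n < \infty$ with positive probability. Writing $\tau_\infty = \sum_{n=0}^\infty (\tau_{n+1}-\tau_n)$, each increment conditioned on the state $\tilde{X}_n$ is exponential with rate $\overline{\lambda}(\tilde{X}_n) \le \sup$, but the useful bound is the \emph{lower} bound $\inf_{x\in\state}\overline{\lambda}(x) =: \lambda_{\min} > 0$, which does \emph{not} by itself prevent explosion. Instead I would use positive recurrence of $\tilde{X}$: fix a reference state $x^* \in \state$; since $\tilde{X}$ is positive recurrent it visits $x^*$ infinitely often almost surely, so the number of indices $n$ with $\tilde{X}_n = x^*$ is infinite. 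At each such visit the holding time is an independent exponential with parameter $\overline{\lambda}(x^*)$, a fixed finite constant, so these contribute infinitely many i.i.d.\ exponential increments to $\tau_\infty$. Since an infinite sum of i.i.d.\ positive exponentials diverges almost surely, we get $\tau_\infty = \infty$ a.s., which is exactly non-explosion. This is the argument underlying \cite[Theorem 3.5.1]{NorrisMC97}.

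Next I would deduce positive recurrence of the continuous-time chain. Fix again the reference state $x^*$ and let $T^c$ denote the first return time to $x^*$ in continuous time and $N$ the first return index in discrete time (number of jumps until $\tilde{X}$ returns to $x^*$). Then $T^c = \sum_{n=0}^{N-1}(\tau_{n+1}-\tau_n)$, and taking expectations with the tower property over the trajectory of the embedded chain, $E_{x^*}[T^c] = E_{x^*}\!\left[\sum_{n=0}^{N-1} \overline{\lambda}(\tilde{X}_n)^{-1}\right] \le \lambda_{\min}^{-1}\, E_{x^*}[N]$. Positive recurrence of $\tilde{X}$ gives $E_{x^*}[N] < \infty$, and combined with $\lambda_{\min}>0$ this yields $E_{x^*}[T^c] < \infty$. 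A finite mean return time to a single state of an irreducible (on $\state$) non-explosive chain is precisely the definition of positive recurrence for $X$ restricted to $\state$.

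The main obstacle I anticipate is the interchange of expectation and the random-length sum in the last step, specifically justifying $E_{x^*}[T^c] = E_{x^*}[\sum_{n=0}^{N-1}\overline{\lambda}(\tilde{X}_n)^{-1}]$ rigorously when $N$ is a random stopping time. This requires care because $N$ depends on the embedded trajectory while the holding times are conditionally independent given that trajectory; the clean way is to condition fully on the path $(\tilde{X}_n)_{n\ge 0}$, compute the conditional expectation of each exponential holding time as $\overline{\lambda}(\tilde{X}_n)^{-1}$, and then apply the tower property together with Tonelli's theorem (all terms are nonnegative) to exchange the expectation with the now-conditionally-deterministic sum. The lower bound $\lambda_{\min}>0$ on the rates is the essential hypothesis that converts the finite discrete-time return expectation into a finite continuous-time one; without it, rare states with vanishing total rate could inflate holding times and break positive recurrence even when $\tilde{X}$ is well-behaved.
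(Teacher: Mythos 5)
Your proof is correct, but it takes a genuinely different route from the paper: the paper gives no argument at all for this theorem, instead deriving it from \cite[Theorem 3.5.1]{NorrisMC97}, which relates invariant measures of a continuous-time chain and its jump chain. In that approach one takes the stationary distribution $\mu$ of $\tilde X$, forms $\lambda_x = \mu_x / \overline{\lambda}(x)$, which Norris's theorem guarantees is invariant for $X$; the hypothesis $\inf_{x\in\state}\overline{\lambda}(x)>0$ enters precisely to make $\sum_x \lambda_x \le \lambda_{\min}^{-1}\sum_x \mu_x < \infty$, so $\lambda$ can be normalized to a stationary distribution, while non-explosion comes (as in your argument) from recurrence of the jump chain. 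Your proof instead works pathwise with return times: $E_{x^*}[T^c] \le \lambda_{\min}^{-1} E_{x^*}[N] < \infty$, with the conditioning/Tonelli step handled correctly. This buys a self-contained, elementary argument with an explicit quantitative bound on the mean return time, whereas the paper's citation buys brevity and directly produces the stationary distribution. Two small polish points: first, your opening paragraph says the rate lower bound ``prevents explosion,'' which contradicts your (correct) later observation that it does not --- only recurrence of $\tilde X$ does; you should simply delete that clause. Second, you invoke irreducibility of $X$ on $\state$ parenthetically; this does hold, but deserves a sentence: since $\state$ is the set of states reachable from $x_0$ and $x_0$ is recurrent for $\tilde X$, every $x \in \state$ leads back to $x_0$ with probability one, so $\state$ is a single communicating class, and then finiteness of the mean return time at the single state $x^* = x_0$ (the natural choice, which also guarantees infinitely many visits in your non-explosion step) propagates to all of $\state$.
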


The condition $\inf_{x\in \state}\overline{\lambda}(x) > 0$ holds when the process $X$ is associated with a weakly reversible mass action system, and the initial condition $X(0)=x_0$ is not an absorbing state. This is expressed by the following result, which can be derived from \cite[Lemma 4.6]{PCK2014}.  Note that the word ``recurrent'' in \cite{PCK2014} is not to be intended in the usual stochastic sense, which is considered here.

\begin{lem}\label{lem : minimum of intensities}
Let $(\S,\C,\Re, \kappa)$ be a weakly reversible mass action system, and assume $\overline\lambda(x_0)>0$. Then $\inf_{x\in \state}\overline{\lambda}(x) > 0$.
\end{lem}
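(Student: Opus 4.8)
The plan is to bound $\overline\lambda$ below by a single positive constant, by reducing the claim to showing that no reachable state is absorbing and then exploiting weak reversibility. First I would record an elementary lower bound on individual mass-action intensities. If $x\ge y$, then $\lambda_y(x)=\prod_{i=1}^d x_i(x_i-1)\cdots(x_i-y_i+1)$ is a product of falling factorials whose every factor is an integer at least $1$ (since $x_i\ge y_i$), so $\lambda_y(x)\ge 1$ and hence $\lambda_{y\to y'}(x)=\kappa_{y\to y'}\lambda_y(x)\ge\kappa_{y\to y'}\ge\kappa_{\min}$, where $\kappa_{\min}:=\min_{y\to y'\in\Re}\kappa_{y\to y'}>0$ is a minimum over the finite set $\Re$ of positive rate constants. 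Therefore $\overline\lambda(x)\ge\kappa_{\min}$ at every state $x$ where at least one reaction is active, and it suffices to prove that every $x\in\state$ enables some reaction, i.e.\ that $\overline\lambda(x)>0$ on $\state$.

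To see this, I would argue by reachability. The state $x_0$ is non-absorbing by hypothesis. For any other $x\in\state$, reachability yields a finite sequence of reactions from $x_0$ to $x$ firing with positive rate; letting $y\to y'$ be the last reaction fired, from a state $\tilde x\ge y$, we have $x=\tilde x-y+y'\ge y'$ because $\tilde x-y\ge0$. Now weak reversibility enters: the product complex $y'$ is a node of the reaction graph belonging to some linkage class, and since $y\to y'\in\Re$ with $y\ne y'$ this linkage class contains at least two complexes; being strongly connected, it must contain an outgoing edge $y'\to y''\in\Re$. As $x\ge y'$, this reaction is active at $x$, so by the first step $\overline\lambda(x)\ge\lambda_{y'\to y''}(x)\ge\kappa_{\min}>0$. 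Combining the two cases gives $\inf_{x\in\state}\overline\lambda(x)\ge\kappa_{\min}>0$.

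The only real obstacle is guaranteeing that the last-produced complex $y'$ has an outgoing reaction, and this is exactly the role of weak reversibility: without strong connectivity of linkage classes, $y'$ could be a terminal complex, and firing into it would manufacture an absorbing reachable state at which $\overline\lambda$ vanishes, defeating any uniform lower bound. The remaining ingredients—the factorial bound and the inequality $x\ge y'$—are routine, so I expect no serious technical difficulty beyond threading the reachability argument carefully.
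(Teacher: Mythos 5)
Your proof is correct, and it takes a genuinely different route from the paper: the paper gives no self-contained argument for this lemma at all, but instead derives it from an external result, \cite[Lemma 4.6]{PCK2014}, which establishes that in a weakly reversible network reachability is ``recurrent'' in the discrete (Petri-net) sense, i.e.\ no reachable state is a dead end. Your argument supplies exactly the content hidden behind that citation. Its two ingredients are: (i) since mass-action intensities are falling factorials of integers, $\lambda_y(x)\ge 1$ whenever $x\ge y$, so any state at which some reaction is active satisfies $\overline\lambda(x)\ge \kappa_{\min}>0$ uniformly; and (ii) any $x\in\state\setminus\{x_0\}$ dominates the product complex $y'$ of the last reaction fired on a positive-rate path from $x_0$, and weak reversibility together with the paper's standing assumption $y\to y\notin\Re$ (which forces $y\ne y'$, so the strongly connected linkage class of $y'$ contains an edge leaving $y'$) produces an active reaction at $x$. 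The trade-off is clear: the paper's citation buys brevity, while your argument buys self-containedness at essentially no cost in length. Two small points would make it airtight: a positive transition rate $q(z,z+h)$ is a \emph{sum} of reaction intensities, so ``the last reaction fired'' should be chosen as one positive summand of that sum; and $P_{x_0}(X(t)=x)>0$ with $x\in\Z^d_{\ge0}$ does yield a finite sequence of positive-rate jumps (explosion before time $t$ is excluded on that event), so the reachability step is legitimate.
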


\section{Main result}
\label{sec:main_result}

Our main result is the following.
\begin{thm}\label{thm:main}
 Consider a weakly reversible, binary mass action system $(\S,\C,\Re, \kappa)$ that has a single linkage class, and let $X$ be the associated continuous time Markov chain. Assume that for each $S\in \S$, $\{S, 2S\} \cap \C\neq \emptyset$. Then $X$ is positive recurrent.
\end{thm}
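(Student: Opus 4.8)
The plan is to move the problem onto the embedded discrete time chain $\tilde X$ and to run a Foster--Lyapunov argument for an $n$-step chain, with the drift estimate supplied by a tier analysis of the mass-action rates as $|x|\to\infty$.

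First I would set aside the trivial case $\overline{\lambda}(x_0)=0$ (then $\state=\{x_0\}$ and $X$ is constant, hence positive recurrent) and otherwise invoke \lemref{lem : minimum of intensities} to get $\inf_{x\in\state}\overline{\lambda}(x)>0$. By \thmref{thm : relation between conti and disc} it then suffices to prove $\tilde X$ is positive recurrent on $\state$, and I would obtain this from the discrete-time Foster--Lyapunov criterion applied to $\{\tilde X_{kn}\}_{k\ge0}$ for a fixed $n$, whose positive recurrence forces that of $\tilde X$. As test function I would take the discrete counterpart of the free-energy function behind the deterministic boundedness result, $V(x)=\sum_{i=1}^d (x_i\ln x_i-x_i+1)$. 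Writing $g(x):=E_x[V(\tilde X_1)]-V(x)$, formula \eqref{eq:transition prob1} gives the exact identity
\[
g(x)=\frac{\mathcal{A}V(x)}{\overline{\lambda}(x)},
\]
and telescoping yields $E_x[V(\tilde X_n)]-V(x)=\sum_{j=0}^{n-1}E_x[g(\tilde X_j)]$. The normalization by $\overline{\lambda}(x)$ is the key move: it replaces the intensities $\lambda_{y\to y'}(x)$, which blow up polynomially, by the relative weights $\lambda_{y\to y'}(x)/\overline{\lambda}(x)$, which are exactly what the tier sequences of \secref{sec:tiers} control. Because each reaction changes the state by the bounded vector $y'-y$, for $j<n$ the state $\tilde X_j$ stays within $L^1$-distance $2n$ of $x$, on which (for $|x|$ large) the tier order of the complexes is unchanged; so it suffices to bound $g$ along sequences tending to infinity.

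A Taylor expansion gives $V(x+y'-y)-V(x)=\langle y'-y,\ln x\rangle+(|y'|-|y|)$ up to lower-order terms, with $|y|=\sum_i y_i$, so the dominant part of $g(x)$ is the convex combination
\[
\sum_{y\to y'}\frac{\lambda_{y\to y'}(x)}{\overline{\lambda}(x)}\Big(\langle y'-y,\ln x\rangle+(|y'|-|y|)\Big).
\]
Since $\ln\lambda_y(x)=\langle y,\ln x\rangle+O(1)$ for binary mass-action kinetics, the weights concentrate on the reactions whose source maximizes $\langle\,\cdot\,,\ln x\rangle$, namely on the top-tier sources. I would argue by contradiction and compactness: a failure of the drift bound produces a sequence $\{x_n\}\subset\state$ with $|x_n|\to\infty$ along which $g$ does not drop below $-\epsilon$, and, passing to a subsequence, I may assume the tier order of all complexes stabilizes. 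In a weakly reversible single linkage class every complex is a source, so the top tier $\Ts1$ is exactly the set of global maximizers of $\langle\,\cdot\,,\ln x_n\rangle$, and every reaction leaving it satisfies $\langle y'-y,\ln x_n\rangle\le0$. If $\Ts1\subsetneq\C$, strong connectivity produces an edge from $\Ts1$ into a strictly lower tier; along that edge the weight stays bounded below while $\langle y'-y,\ln x_n\rangle\to-\infty$, and since the remaining finitely many terms are bounded above (high-tier sources contribute nonpositively, low-tier sources carry vanishing weight), $g(x_n)\to-\infty$, a contradiction. Via the telescoping identity this settles every tier sequence with a proper top tier.

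The genuine obstacle is the degenerate case $\Ts1=\C$, in which $\langle y-y',\ln x_n\rangle$ is bounded for every pair of complexes and the leading term above cancels, so no single step need decrease $V$. This is where both hypotheses and the multi-step device are essential. The assumption $\{S,2S\}\cap\C\neq\emptyset$ ensures that each growing coordinate $\ln x_{n,i}\to\infty$ is registered by some complex; together with binariness this sharply restricts the admissible asymptotic geometry of $\ln x_n$ and, in fact, excludes the case outright when $0\in\C$. In the configurations that survive I would use the tier-sequence formalism to show that the embedded chain confined to this top-tier regime is itself sufficiently weakly reversible that, within $n$ steps, it fires, with probability bounded below, a reaction carrying the state to a strictly lower tier, making the $n$-step drift $\sum_{j=0}^{n-1}E_{x_n}[g(\tilde X_j)]$ at most $-\epsilon$ even though each individual $g(\tilde X_j)$ need not be negative. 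Proving this multi-step strict decrease uniformly over all stabilized degenerate tier sequences is the crux of the argument, and the step I expect to be the hardest.
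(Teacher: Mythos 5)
Your high-level plan --- reduce to the embedded chain via \thmref{thm : relation between conti and disc}, apply a discrete-time Foster--Lyapunov criterion to a multi-step chain built from $\tilde X$ with the function $V$ of \eqref{eq:MainLyapunov}, and estimate drifts along sequences with stabilized tier structure --- is indeed the paper's strategy. The gap is inside the tier analysis: you identify the top probability tier $\Ts1$ with the set of maximizers of $\langle \cdot,\ln x_n\rangle$, via the claim ``$\ln\lambda_y(x)=\langle y,\ln x\rangle+O(1)$''. That identity fails at boundary states: if $x_{n,u}=0$ and $y_u\geq 1$, then $\lambda_y(x_n)=0$ while $(x_n\vee 1)^y$ may still tend to infinity. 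This is precisely the distinction between the paper's S-type tiers (which control jump probabilities) and D-type tiers (which control increments of $V$, cf.\ Lemma \ref{lemma:main}), and conflating them invalidates your one-step contradiction in the case $\Ts1\subsetneq\C$. The paper's Example \ref{ex} is a concrete counterexample to that step: at $x_n=(n,1,0)$ one has $\Ts1=\{A,A+B\}\subsetneq\C$, yet
\[
E_{x_n}[V(\tilde X_1)]-V(x_n)=\frac{\A V(x_n)}{\overline{\lambda}(x_n)}=\frac{\kappa_1(2\ln 2-1)}{\kappa_1+\kappa_2}>0 \quad\text{for all } n.
\]
The only reaction leaving $\Ts1$, namely $A+B\to A+C$, lands in the \emph{same} D-type tier ($(x_n\vee 1)^{A+B}=(x_n\vee 1)^{A+C}=n$), so $V$ does not drop along it, while the reactions that would send $V$ to $-\infty$ (such as $A+C\to C$) have intensity zero. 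So the case that genuinely requires the multi-step device is not your degenerate case $\Ts1=\C$, but this generic boundary situation where every reaction out of a top-S-tier source stays inside the top D-tier. Relatedly, your claim that the tier order is unchanged within bounded distance of $x_n$ is true for D-type tiers (Lemma \ref{lem : Td1 stays same after jumps}) but false for S-type tiers: moving from $(n,1,0)$ to $(n,0,1)$ changes which complexes have nonzero intensity.

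What you defer as ``the crux'' is therefore essentially the entire proof, and the structural hypotheses enter differently from what you sketch. The paper first proves (proof of Theorem \ref{thm:main} in Section \ref{subsec2}) that binariness together with $\{S,2S\}\cap\C\neq\emptyset$ forces the alignment \eqref{hypo1}, i.e.\ $\Ts1\subset\Td1$ along every proper tier sequence; this is where both hypotheses are consumed (they rule out a top-D-tier complex $S_u+S_v$ with $x_{n,u}\equiv 0$ unless it can be compared to a complex $S_v$ or $2S_v$). Then, under \eqref{hypo1}, Theorem \ref{thm:main2} is proved by constructing, using strong connectedness of the single linkage class, an ordered path of $r=|\Re|$ reactions whose successive sources remain in the current top S-tier after each jump --- Lemma \ref{lem : Ts1 after jump} is what allows the path to be continued across the boundary --- and which at some step exits the top D-tier (Lemma \ref{lem:existence of a set of source in D and Ts1 }). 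Such a path has probability bounded away from zero and carries a $V$-increment tending to $-\infty$ (Lemma \ref{lem : limit of prob x difference V}), contradicting failure of the Foster--Lyapunov condition for the $r$-step chain (Lemma \ref{thm : main analytic thm}). Your proposal contains no argument for either component --- the verification of \eqref{hypo1} or the uniform multi-step decrease --- and since your case split is organized around the wrong dichotomy, the missing step is not a technical refinement but the core of the theorem.
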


To prove Theorem \ref{thm:main} we will use Foster-Lyapunov functions, the notion of tiers \reply{(explained and defined in the next section)}, and properties of \reply{an associated $n-$step} embedded discrete time Markov chain. Specifically, consider the entropy-like function $V\colon \R^d_{\geq0}\to \R_{\geq0}$ defined by \begin{equation}\label{eq:MainLyapunov}
V(x) = \sum_{i=1}^d \Big( x_i(\ln{x_i}-1)+1\Big),   
\end{equation}
with the convention $0 \ln{0}=0$. 
Notably, this function has been shown to be a Lyapunov function for the deterministic models of \emph{complex balanced} mass action systems \cite{HornJack72, Horn72}, and\reply{, in connection with \emph{tier sequences}, for those models that are weakly reversible and have a single linkage class} \cite{AndBounded_ONE2011,AndGAC_ONE2011}. It has been further utilized to study the stability of stochastic models of mass action systems in different works, including \cite{ACGW:lyapunov, AK2018, ACKN2018, dembo2018}. \reply{In this paper}, we will show that $V$ can be utilized as a Foster-Lyapunov function for the $n-$step discrete time embedded chain, \reply{for a specially chosen positive integer $n$,} even though it may not be a Foster-Lyapunov function for the continuous time Markov chain $X$ itself.

In the following example we show how Theorem \ref{thm:main} can be applied, and how the function $V$ cannot be readily used as Foster-Lyapunov function for $X$.
\begin{example}\label{ex}
Consider the mass action system
\begin{equation*}
   \begin{tikzpicture}[baseline={(current bounding box.center)}, scale=0.8]
   \node[state] (1) at (0,2)  {$A$};
   \node[state] (2) at (3,2)  {$A+B$};
   \node[state] (3) at (6,2)  {$A+C$};
   \node[state] (4) at (4.5,0)  {$C$};
   \node[state] (5) at (1.5,0)  {$2B$};
   \path[->]
    (1) edge node {$\kappa_1$} (2)
    (2) edge node {$\kappa_2$} (3)
    (3) edge node {$\kappa_3$} (4)
    (4) edge node {$\kappa_4$} (5)
    (5) edge node {$\kappa_5$} (1);
  \end{tikzpicture}
\end{equation*}
where each rate constant is placed next to its corresponding reaction. The system is weakly reversible and has a single linkage class, and $A,2B,C\in \C$. Then, Theorem \ref{thm:main} applies and the associated Markov process $X$ is positive recurrent for any choice of rate constants. To the best of our knowledge, there are no other results in the literature that allows one to quickly reach this conclusion for this model, and finding a tailored Foster-Lyapunov function may not be easy. In particular, the function $V$ as defined in \eqref{eq:MainLyapunov} is not a Foster-Lyapunov function for $X$. To show this, it suffices to show that there exists an initial condition $X(0)=x_0$ and a sequence of points $\{x_n\}$ in $\state$ whose norm tends to infinity and for which $$\limsup_{n\to\infty}\mathcal{A}V(x_n)>0.$$
To this aim, consider $x_0=(1,0,0)$ and the sequence defined by $x_n=(n,1,0)$. \reply{Note that by sequential occurrences of reactions 1 and 5, we may conclude that $x_n \in \state$ .  Moreover,} we have
$$\mathcal{A}V(x_n)=\kappa_1n(2\ln{2}-1),$$
which tends to infinity as $n$ tends to infinity.
 \hfill $\triangle$
\end{example}

\reply{
We will give here a very high-level motivation for the arguments that will follow. First, loosely speaking, the assumption that either $S \in \C$ or $2S \in \C$, ensures that for any sequence of points $x_n$ going to $\infty$ one of the source complexes with largest intensities is of the form $S$ or $2S$.  The intensity functions for these reactions are ``smooth'' away from the origin, in the sense that they can not change dramatically due to a single instance of a reaction.  This is unlike, say, the complex $S_1+S_2$, whose intensity can change dramatically through small changes, as seen by considering the values $(n,1)$ and $(n,0)$.  Second, the weak reversibility assumption is helpful as it guarantees that each product complex is also a source complex. Hence, whenever a reaction has the largest intensity, the reactions starting from the product complex are either of the same order of magnitude, or smaller.  By following a sequence of such reactions (leading to the analysis of the $n-$step chain), we are guaranteed to eventually find a source complex whose reaction rates are of lower order of magnitude, and we will show how this guarantees positive recurrence by the Foster-Lyapunov criterion.}


\section{Tiers}
\label{sec:tiers}

In this section, we introduce both D-type and S-type tier sequences, which have now appeared in the literature a number of times \cite{ACKN2018,AndBounded_ONE2011,AndGAC_ONE2011, AK2018},  and state some known relevant results.  
Typically, tier sequences are used in connection with the function $V$ as described in \eqref{eq:MainLyapunov}. The usual strategy consists of proving that along any tier sequence $\{x_n\}$ we have
$$\limsup_{n\to\infty}\mathcal{A} V(x_n)\leq -1,$$
implying positive recurrence by the Foster-Lyapunov criterion \cite{MT-LyaFosterIII} and from the fact that any sequence of states has a tier subsequence.  

The above approach does not always work. In particular, Example~\ref{ex} shows how it may fail for models in the class described in Theorem~\ref{thm:main}. Hence, in order to prove Theorem~\ref{thm:main}, we will introduce a new concept of tiers along sequences of paths in Section~\ref{sec:along_path}. With this new concept, we will be able to prove that even if $\limsup_{n\to \infty} \mathcal{A}V(x_n) >0$ for a particular tier sequence $\{x_n\},$ positive recurrence can still be shown by consideration of how $V$ changes over a fixed number of transitions.

\subsection{Definitions}\label{sec:tier_def}

\reply{In this section, we formally introduce the concept of ``tiers.'' Tiers are partitions of the set of complexes, and they are based on the different orders of magnitude of the different rate functions along a sequence of points. We define two different types of tiers, specifically D-type and S-type tiers. We will give  intuition on the role of these objects after their formal definitions.}

\begin{defn}\label{def31}
Let $( \S,\C,\Re)$ be a  reaction network and let $\{x_n\}$ be a sequence in $\Z^d_{\ge 0}$, with $n$ varying from 1 to $\infty$.  We say that $\C$ has a \emph{D-type partition along  $\{x_n\}$} if there exists a partition \reply{$\{T^{D,1}_{\{x_n\}},\dots, T^{D,P}_{\{x_n\}}\}$} of $\C$ such that:
\begin{enumerate}[label=(\roman*)]
\item if $y,y' \in T^{D,i}_{\{x_n\}}$, then 
\begin{equation*}
C=\lim_{n\rightarrow \infty} \frac{(x_n \vee 1)^{y}}{(x_n\vee 1)^{y'}} 
\end{equation*}
exists, and $0<C<\infty$;
\item if $y \in T^{D,i}_{\{x_n\}}$ and $y' \in T^{D,k}_{\{x_n\}}$ with $i < k$ then
\begin{align*}
\lim_{n\rightarrow \infty} \dfrac{(x_n \vee 1)^{y'}}{(x_n\vee 1)^{y}} = 0.
\end{align*}
\end{enumerate}
The sets of a D-type partition are called \emph{D-type tiers along $\{x_n\}$}.  We will say that $y$ is in a higher D-type tier than $y'$ along $\{x_n\}$ if $y \in T^{D,i}_{\{x_n\}}$ and $y' \in T^{D,j}_{\{x_n\}}$ with $i < j$. In this case, we will write $y \succ_D y'$. If $y$ and $y'$ are in the same D-type tier, then we will write $y\sim_D y'$. 
\end{defn} 



\begin{defn}\label{def32}
Let $(\S,\C,\Re)$ be a reaction network and let $\{x_n\}$ be a sequence in $\Z^d_{\ge 0}$, with $n$ varying from 1 to $\infty$. We say that $\C$ has a \emph{S-type partition along  $\{x_n\}$} if there exists a partition $\{T^{S,1}_{\{x_n\}}, \cdots , T^{S,P}_{\{x_n\}}, T^{S,\infty}_{\{x_n\}}\}$ of $\C$ such that:
\begin{enumerate}[label=(\roman*)]
\item $y \in T^{S,\infty}_{\{x_n\}}$ if and only if $\lambda_{y}(x_n) = 0$ for all $n\geq 1$; 
\item $y \in \bigcup_{i=1}^P T^{S,i}_{\{x_n\}}$  if and only if   $\lambda_y(x_n) \ne 0$ for all $n\geq1$;
\item if $y,y' \in T^{S,i}_{\{x_n\}}$, with $i \in \{1,\dots,P\}$, then 
\begin{align*}
C=\lim_{n\rightarrow \infty} \dfrac{\lambda_{y'}(x_n)}{\lambda_{y}(x_n)} 
\end{align*}
exists, and $0<C<\infty$;
\item if $y \in T^{S,i}_{\{x_n\}}$ and $y' \in T^{S,k}_{\{x_n\}}$ with $1\le i < k\le P$, then
\begin{align*}
\lim_{n\rightarrow \infty} \dfrac{\lambda_{y'}(x_n)}{\lambda_{y}(x_n)} = 0.
\end{align*}
\end{enumerate}
The sets of a S-type partition are called \emph{S-type tiers along $\{x_n\}$}.  We will say that $y$ is in a higher S-type tier than $y'$ along $\{x_n\}$ if $y \in T^{S,i}_{\{x_n\}}$ and $y' \in T^{S,j}_{\{x_n\}}$ with $i < j$. 
\end{defn}

\reply{The above tier structures make hierarchies for the complexes  with respect to the sizes of $(x_n \vee 1)^y$ and $\lambda_y(x_n)$, respectively, along a sequence $\{x_n\}$. Specifically, a D-type tier structure provides an ordering for the order of magnitudes of \emph{deterministic} mass-action rate functions of different reactions: the deterministic mass-action rate of the reaction $y_1\to y_1'$ is of a higher order of magnitude with respect to the rate of $y_2\to y_2'$, along a sequence of points $\{x_n\}$, if $y_1 \succ_D y_2$. Similarly, an S-type tier structure defines the order of magnitudes of the \emph{stochastic} mass-action rate functions of reactions with different source complexes. The reason why in Definition~\ref{def31} the quantity $(x_n\vee 1)^y$ appears, rather than the mass-action rate $x_n^y$, is technical: this is due to the necessity of considering sequences of points $\{x_n\}$ with possibly null components.
}

\begin{defn}\label{def_tier-seqence}
Let $X$ be the Markov process associated with a mass action system $(\S,\C,\Re, \kappa)$, with $X(0)=x_0$. Let $\{x_n\}$ be a sequence in $\Z^d_{\ge 0}$, with $n$ varying from 1 to $\infty$. The sequence $\{x_n\}$ is called a  \emph{proper tier sequence} of $X$ if the following holds:
\begin{enumerate}[label=(\roman*)]
\item $x_n \in \state$ for each $n\ge 1$; 
\item $\lim_{n\rightarrow\infty}x_{n,i} \in [0,\infty]$ for each $i$, and there exists an $i$ for which $\lim_{n\rightarrow \infty}x_{n,i} = \infty$;
\item $\C$ has both a D-type partition and an S-type partition along $\{x_n\}$. 
\end{enumerate}
\end{defn}

\subsection{Known results}

\reply{The next lemma points out that any unbounded sequence of points contains a proper tier sequence as a subsequence.}

\begin{lem}\label{lem21} Let $X$ be the Markov process associated with a mass action system $(\S,\C,\Re, \kappa)$, with $X(0)=x_0$. Let $\{x_n\}_{n=1}^\infty \subset \state$ be a sequence such that $\limsup_{n\rightarrow \infty}|x_n|=\infty$. Then there exists a subsequence of $\{x_n\}$ which is a proper tier sequence of $X$.
\end{lem}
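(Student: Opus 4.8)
The plan is to extract a subsequence along which every relevant ratio converges, by a diagonal argument applied to finitely many complexes. First I would record that $\C$ is a finite set, say $\C=\{y^{(1)},\dots,y^{(m)}\}$, so that all the objects we must control — the powers $(x_n\vee 1)^{y^{(j)}}$ and the mass-action monomials $\lambda_{y^{(j)}}(x_n)$ — form only finitely many sequences. The construction of both the D-type and the S-type partition reduces to forcing the pairwise ratios of these finitely many sequences to have limits in the extended sense, after which the tiers are read off as equivalence classes under ``ratio tends to a finite nonzero constant'' and are ordered by ``ratio tends to $0$ or $\infty$.''

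Concretely, I would proceed in three steps. \emph{Step 1 (D-type).} For each ordered pair $(j,k)$ consider the sequence $r^{jk}_n=(x_n\vee 1)^{y^{(k)}}/(x_n\vee 1)^{y^{(j)}}$, which lies in $(0,\infty)$ since $x_n\vee 1\in\R_{>0}^d$. Passing to the extended reals $[0,\infty]$, which is compact, and iterating over the finitely many pairs, I would extract by a diagonal (finite-intersection) argument a subsequence along which every $r^{jk}_n$ converges in $[0,\infty]$. Define $y\sim_D y'$ when the limit of the corresponding ratio is finite and strictly positive; transitivity and the fact that $C\in(0,\infty)$ for $y\sim_D y'$ while the cross-ratios tend to $0$ or $\infty$ follow from multiplicativity of the limits, giving the partition and its ordering exactly as in Definition~\ref{def31}. \emph{Step 2 (S-type).} First split $\C$ according to whether $\lambda_{y^{(j)}}(x_n)=0$ for all $n$; since the subsequence is infinite and $x_n\in\Z^d_{\ge0}$, after a further passage to a subsequence I may assume that for each complex either $\lambda_{y^{(j)}}(x_n)=0$ for all (subsequence) $n$ or $\lambda_{y^{(j)}}(x_n)\neq0$ for all $n$, which yields $T^{S,\infty}_{\{x_n\}}$ and its complement as required by conditions (i)--(ii). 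On the complement I would repeat the $[0,\infty]$-compactness/diagonal argument of Step~1 applied to the ratios $\lambda_{y^{(k)}}(x_n)/\lambda_{y^{(j)}}(x_n)$, producing the ordered S-type tiers $T^{S,1}_{\{x_n\}},\dots,T^{S,P}_{\{x_n\}}$. \emph{Step 3 (condition (ii) of Definition~\ref{def_tier-seqence}).} The component-wise limits $\lim_n x_{n,i}\in[0,\infty]$ are secured by one more diagonal extraction over the $d$ coordinates, and the existence of an $i$ with $\lim_n x_{n,i}=\infty$ is inherited from the hypothesis $\lim_n|x_n|=\infty$, since a bounded subsequence in every coordinate would force $|x_n|$ bounded. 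Intersecting all the (finitely many) subsequences extracted in Steps~1--3 gives a single subsequence that is a proper tier sequence, and $x_n\in\state$ is preserved by subsequencing.

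The substance of the argument is entirely the repeated compactness extraction, so I expect no genuine obstacle; the only point requiring a little care is the internal consistency of the tier \emph{ordering}. One must check that the finite/nonzero relation $\sim_D$ is a genuine equivalence and that the induced order is well defined and total — i.e., that whenever two complexes are not $\sim_D$-equivalent the ratio limit is $0$ or $\infty$, and that these orderings are mutually compatible (no cycles), which again follows from multiplicativity: if the limit of $(x_n\vee1)^{y'}/(x_n\vee1)^{y}$ is the reciprocal of the limit of $(x_n\vee1)^{y}/(x_n\vee1)^{y'}$, then a value in $\{0,\infty\}$ for one forces the complementary value for the other, and products of these limits along any chain are consistent. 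Since all of this is a finite bookkeeping over $m=|\C|$ complexes, the lemma follows.
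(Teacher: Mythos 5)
Your proposal is correct and takes essentially the same route as the paper, which does not spell the argument out but simply notes that the proof follows Lemma 4.2 of \cite{AndGAC_ONE2011}, ``the relevant tiers can each be constructed sequentially by repeatedly taking subsequences'' --- precisely your repeated compactness extraction in $[0,\infty]$ over the finitely many complexes, ratios, and coordinates, with the tiers read off as equivalence classes. The only cosmetic caveat is that ``intersecting'' the subsequences should be phrased as nested successive extraction, which is what your construction actually does.
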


The proof of Lemma \ref{lem21} is similar to that of Lemma 4.2 in \cite{AndGAC_ONE2011}. In particular, the relevant
tiers can each be constructed sequentially by repeatedly taking subsequences.
 

%
The following lemma states that for each proper tier sequence, there exists at least one reaction whose source and product complex are not in the same D-type tier.  This lemma was originally stated in \cite{ACKN2018} as Lemma 4.1.

\begin{lem}\label{lem: Tdi exists along tier sequence}
Let $X$ be the Markov process associated with a mass action system $(\S,\C,\Re, \kappa)$, with $X(0)=x_0$. For a proper tier sequence $\{x_n\}$ of $X$, there exists a reaction $y \to y' \in \Re$ such that $y \in T^{D,i}_{\{x_n\}}$ and $y' \in T^{D,j}_{\{x_n\}}$ for some $i\neq j$. 
\end{lem}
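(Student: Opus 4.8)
The plan is to argue by contradiction: assume that every reaction $y \to y' \in \Re$ satisfies $y \sim_D y'$, i.e.\ source and product always lie in the same D-type tier, and derive a contradiction from the fact that some coordinate $x_{n,i} \to \infty$ together with the reachability constraint $x_n \in \state$.

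The key device is to encode the D-type partition as a nonnegative linear functional recording logarithmic growth orders. Set $g(n) = \max_{1\le j \le d} \ln(x_{n,j}\vee 1)$; since some coordinate of $x_n$ tends to infinity, $g(n) \to \infty$. After passing to a subsequence (which does not change the given D-type tiers, since these are determined by the limits of the ratios $(x_n\vee1)^y/(x_n\vee1)^{y'}$, and such limits are inherited by subsequences) I may assume that $\ell_j := \lim_{n\to\infty} \ln(x_{n,j}\vee 1)/g(n)$ exists for every $j$, that each $\ell_j \in [0,1]$, and—fixing the argmax by pigeonhole—that some coordinate $i^*$ satisfies $\ln(x_{n,i^*}\vee 1) = g(n)$ for all $n$, whence $\ell_{i^*}=1$ and $x_{n,i^*}\to\infty$. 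Writing $L(y) := \ell\cdot y = \lim_{n\to\infty} \ln\big((x_n\vee1)^y\big)/g(n)$, one checks that $y \sim_D y'$ forces $L(y)=L(y')$: indeed $(x_n\vee1)^{y'}/(x_n\vee1)^y \to C \in (0,\infty)$ means that $\ln\big((x_n\vee1)^{y'}\big) - \ln\big((x_n\vee1)^y\big)$ stays bounded, so dividing by $g(n)$ yields $\ell\cdot(y'-y)=0$.

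I would then combine this with reachability. Under the contradiction hypothesis, $\ell\cdot(y'-y)=0$ for every reaction $y\to y'$, so $\ell$ is orthogonal to the stoichiometric subspace. Since $x_n \in \state$, the difference $x_n - x_0$ is an integer combination of reaction vectors $y'-y$, and hence $\ell\cdot x_n = \ell\cdot x_0$ is constant in $n$. On the other hand, $\ell \ge 0$ gives $\ell\cdot x_n \ge \ell_{i^*}\, x_{n,i^*} = x_{n,i^*} \to \infty$. This contradiction shows that at least one reaction must have its source and product in different D-type tiers, which is the claim.

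The step I expect to be the crux is the careful setup of $\ell$: one must verify that passing to a subsequence does not alter the given D-type partition—so that a tier-changing reaction found along the subsequence is tier-changing along $\{x_n\}$ itself—and that the implication ``$y\sim_D y' \Rightarrow \ell\cdot(y'-y)=0$'' holds. Once this is in place, orthogonality to the stoichiometric subspace and the growth-versus-conservation contradiction are immediate. I note that the argument uses neither weak reversibility nor the single-linkage-class hypothesis: it relies only on $\{x_n\}$ being a proper tier sequence, in particular on properties (i) and (ii) of Definition~\ref{def_tier-seqence}.
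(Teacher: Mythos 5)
Your proof is correct. Note that this paper does not actually prove the lemma itself --- it is imported from \cite{ACKN2018} (Lemma 4.1 there) --- so your argument supplies a proof that the present paper omits, and it is essentially the standard one used in that reference: pass to a subsequence along which the normalized logarithms $\ell_j=\lim_{n\to\infty}\ln(x_{n,j}\vee 1)/g(n)$ exist, observe that if every reaction preserved D-type tiers then $\ell$ would be orthogonal to the stoichiometric subspace, and then play reachability ($x_n-x_0$ lies in that subspace, so $\ell\cdot x_n=\ell\cdot x_0$ is constant) against the growth $\ell\cdot x_n\ge \ell_{i^*}x_{n,i^*}=x_{n,i^*}\to\infty$. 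The delicate points you flag are indeed handled correctly: the relation $y\sim_D y'$ (a ratio converging to a finite positive constant, per Definition~\ref{def31}) persists along subsequences and forces $\ell\cdot(y'-y)=0$, the contradiction hypothesis is stated along the original sequence and is inherited by the subsequence, and neither weak reversibility nor the single-linkage-class assumption is used, consistent with the generality of the statement.
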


\subsection{Tiers along sequences of paths}\label{sec:along_path}

Here, we define tier structures on a set of reactions, which will be utilized to describe the behavior of the embedded Markov chain after a fixed amount of transitions. \reply{We recall here that the classical strategy of studying the behaviour of the embedded chain after each step, using the function $V$ described in \eqref{eq:MainLyapunov} as potential Lyapunov function, may not lead to a proof of positive recurrence. This is the case of Example~\ref{ex}, and this is the reason why we extend the method to consider mutliple transitions of the embedded chain.}

\begin{defn}\label{def:new tiers}
Let $X$ be the Markov process associated with a mass action system $(\S,\C,\Re, \kappa)$, with $X(0)=x_0$. Let \reply{$k$ be a finite, positive integer and let} $R=\{y_1\to y'_1,y_2\to y'_2,\dots,y_k \to y'_k \}$ be \reply{a sequence of elements of $\Re$ of length $k$}. Let $\{x_n\}$ be a sequence such that $\{x_n + \sum_{j=1}^{i-1}(y'_j -y_j)\}$ is a proper tier sequence of $X$ for each $i=1,2,\dots,k.$ Then, 
\begin{enumerate}[label=(\roman*)]
\item $R \in \mathbb{D}_{\{x_n\}}$ if $y_i \in T^{D,1}_{\{x_n+\sum_{j=1}^{i-1}(y'_j-y_j)\}}$ for each $i=1,2,\dots,k$ and $y'_\ell \notin T^{D,1}_{\{x_n+\sum_{j=1}^{\ell-1}(y'_j-y_j)\}}$ for some $\ell \in \{1,2,\dots,k\}$;
\item\label{part:most_likely} $R\in \mathbb{T}^{S,1}_{\{x_n\}}$ if $y_i \in T^{S,1}_{\{x_n+\sum_{j=1}^{i-1}(y'_j-y_j)\}}$ for each $i=1,2,\dots,k$.
\end{enumerate}
 \end{defn}

Note that \ref{part:most_likely} in Definition \ref{def:new tiers} implies that this particular sequence of reactions is one of the most likely sequences of $k$ reactions that will be observed if the process starts at state $x_n$. This will be made precise in the next lemma.  First, we introduce the following notation: consider a mass action system $(\S,\C,\Re, \kappa)$ and let $\tilde X$ be the associated embedded Markov chain, with $\tilde{X}_0=x$. Denote the event that the reactions $y_1\to y_1'$, $y_2 \to y_2'$, $\dots$, $y_k \to y_k'$ are the first $k$ reactions to occur \emph{in order} by 
\[
    x \xrightarrow{y_1\to y_1',\dots,y_k\to y'_k} x+\sum_{j=1}^k(y'_j-y_j).
\]

\begin{lem}\label{lem : prob of r jumps}
Let $X$ be the Markov process associated with a weakly reversible mass action system $(\S,\C,\Re, \kappa)$, with $X(0)=x_0$.  Let $\tilde{X}$ be the embedded discrete-time Markov chain of $X$. \reply{Let $k$ be a finite, positive integer and let $R=\{y_1\to y'_1$,$y_2\to y'_2,\dots,y_k\to y'_k\}$ be a sequence of elements of $\Re$ of length $k$.}  Let $\{x_n\}$ be a sequence such that $\{x_n + \sum_{j=1}^{\ell-1} (y'_j-y_j)\}$ is a proper tier sequence of $X$ for each $\ell=1,2,\dots,k.$ Then 
\begin{enumerate}[label=(\roman*)]
\item\label{part:positive_limit} if
 $R \in \mathbb{T}^{S,1}_{\{x_n\}},$ 
 \[\lmt P_{x_n}\left(x_n \xrightarrow{y_1\to y_1',\dots,y_k\to y'_k} x_n+\sum_{j=1}^k(y'_j-y_j) \right) > 0, \quad \text{and}\]
 \item\label{part:zero_limit} if 
 $R \not \in \mathbb{T}^{S,1}_{\{x_n\}},$ 
 \[\lmt P_{x_n}\left(x_n \xrightarrow{y_1\to y_1',\dots,y_k\to y'_k} x_n+\sum_{j=1}^k(y'_j-y_j)\right)=0.\]
 \end{enumerate}
\end{lem}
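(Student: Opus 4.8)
The plan is to factor the path probability through the Markov property of the embedded chain and then analyze each resulting one-step factor with the S-type partition of the relevant shifted sequence. Write $z_i^{(n)}=x_n+\sum_{j=1}^{i-1}(y'_j-y_j)$ for the state reached after the first $i-1$ reactions of $R$ have fired in order; this state is deterministic given that those reactions occurred. By hypothesis each $\{z_i^{(n)}\}_n$ is a proper tier sequence, hence carries an S-type partition. Conditioning on the first $i-1$ reactions having fired in order pins the current state at $z_i^{(n)}$, so by the one-step transition probabilities \eqref{eq:transition prob1} and the mass-action form \eqref{mass} the path probability factors as
\[
P_{x_n}\Big(x_n \xrightarrow{y_1\to y_1',\dots,y_k\to y'_k} x_n+\sum_{j=1}^k(y'_j-y_j)\Big)=\prod_{i=1}^k \frac{\lambda_{y_i\to y'_i}(z_i^{(n)})}{\overline\lambda(z_i^{(n)})}=\prod_{i=1}^k \frac{\kappa_{y_i\to y'_i}\,\lambda_{y_i}(z_i^{(n)})}{\overline\lambda(z_i^{(n)})}.
\]

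First I would study a single factor. Since the system is weakly reversible and $x_0$ is not absorbing, \lemref{lem : minimum of intensities} gives $\inf_{x\in\state}\overline\lambda(x)>0$; combined with the dichotomy built into Definition~\ref{def32} (along $\{z_i^{(n)}\}$ every complex has $\lambda_y$ either identically zero, landing in $T^{S,\infty}$, or nowhere zero), this forces $T^{S,1}_{\{z_i^{(n)}\}}\neq\emptyset$. Fix a reference source $y^{(i)}_\ast\in T^{S,1}_{\{z_i^{(n)}\}}$; by weak reversibility $y^{(i)}_\ast$ is the source of some reaction $y^{(i)}_\ast\to w_i\in\Re$, so $\overline\lambda(z_i^{(n)})\geq \kappa_{y^{(i)}_\ast\to w_i}\,\lambda_{y^{(i)}_\ast}(z_i^{(n)})$. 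Dividing numerator and denominator of the factor by $\lambda_{y^{(i)}_\ast}(z_i^{(n)})$ and applying properties (iii)–(iv) of Definition~\ref{def32}, the normalized denominator $\overline\lambda(z_i^{(n)})/\lambda_{y^{(i)}_\ast}(z_i^{(n)})$ converges to a finite, strictly positive constant: only the finitely many top-tier sources survive in the limit, lower-tier sources vanish, and $T^{S,\infty}$ sources contribute zero.

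For part \ref{part:positive_limit}, when $R\in\mathbb{T}^{S,1}_{\{x_n\}}$ each source $y_i$ lies in $T^{S,1}_{\{z_i^{(n)}\}}$, so $\lambda_{y_i}(z_i^{(n)})/\lambda_{y^{(i)}_\ast}(z_i^{(n)})$ tends to a positive finite limit by property (iii); hence every factor converges to a strictly positive constant and so does the finite product, yielding the claimed positive limit. For part \ref{part:zero_limit}, when $R\notin\mathbb{T}^{S,1}_{\{x_n\}}$ there is an index $i$ with $y_i\notin T^{S,1}_{\{z_i^{(n)}\}}$. If $y_i\in T^{S,\infty}_{\{z_i^{(n)}\}}$ then $\lambda_{y_i}(z_i^{(n)})=0$ for all $n$, so that factor, and hence the whole product, is identically zero. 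Otherwise $y_i$ lies in a strictly lower tier $T^{S,m}_{\{z_i^{(n)}\}}$ with $m>1$, and the comparison
\[
\frac{\lambda_{y_i}(z_i^{(n)})}{\overline\lambda(z_i^{(n)})}\leq \frac{1}{\kappa_{y^{(i)}_\ast\to w_i}}\cdot\frac{\lambda_{y_i}(z_i^{(n)})}{\lambda_{y^{(i)}_\ast}(z_i^{(n)})}\longrightarrow 0
\]
(using property (iv)) shows this factor vanishes; since the remaining factors are probabilities bounded by $1$, the product tends to $0$.

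The factorization and the limit bookkeeping are routine. The main obstacle is controlling the denominator: one must show that $\overline\lambda(z_i^{(n)})$ is, up to bounded factors, governed entirely by the top S-type tier. This rests on two points where the structural hypotheses enter — that $T^{S,1}_{\{z_i^{(n)}\}}$ is nonempty, which I get from the positive lower bound on $\overline\lambda$ over $\state$ (\lemref{lem : minimum of intensities}), and that a top-tier complex actually appears as a reaction source inside $\overline\lambda$, which is exactly where weak reversibility is used, since it guarantees the reference complex $y^{(i)}_\ast$ has an outgoing reaction.
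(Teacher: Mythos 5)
Your proposal is correct and takes essentially the same route as the paper's proof: the identical factorization of the path probability into one-step reaction probabilities $\lambda_{y_i\to y_i'}(z_i^{(n)})/\overline{\lambda}(z_i^{(n)})$, the same tier-based limit analysis via properties (iii)--(iv) of Definition~\ref{def32}, and the same use of weak reversibility to lower-bound $\overline{\lambda}$ by the intensity of a reaction whose source lies in $T^{S,1}$. The only divergence is bookkeeping: where the paper carries the indicator $\mathbbm{1}_{\{\overline{\lambda}(z_n(m))>0\}}$ through the factorization to handle possibly-vanishing denominators, you instead invoke \lemref{lem : minimum of intensities} to get $\inf_{x\in\state}\overline{\lambda}(x)>0$, which requires $\overline{\lambda}(x_0)>0$ --- a hypothesis not stated in the lemma, but one you should justify in a line by noting that the existence of a proper tier sequence forces $\state$ to be infinite, hence $x_0$ cannot be absorbing.
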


\begin{proof}
For each $n\geq 1$ and $1\leq m\leq k$, let $z_n(m)=x_n+\sum_{j=1}^{m-1}(y'_j-y_j)$. 
\reply{By \eqref{eq:transition prob1}, we have}
\begin{equation}\label{eq:product}
P_{x_n}\left (x_n \xrightarrow{y_1\to y_1',\dots,y_k\to y'_k} x_n+\sum_{j=1}^k(y'_j-y_j)\right )
= \prod_{m=1}^{k} \frac{\lambda_{y_m\to y'_m}(z_n(m))}{\overline{\lambda}(z_n(m))}\mathbbm{1}_{\{\overline{\lambda}(z_n(m))>0\}}.
\end{equation}

Assume that $R \in \mathbb{T}^{S,1}_{\{x_n\}}$. Then, for each $1\leq m \leq k$ we have $y_m \in T^{S,1}_{\{z_n(m)\}}$, and by definition of S-type tiers 
\begin{align*}
\lmt \dfrac{\lambda_{y_m\to y'_m}(z_n(m))}{\overline{\lambda}(z_n(m))} = \lmt \dfrac{\kappa_{y_m\to y'_m}}{\dsum_{y\to y'\in \Re}\kappa_{y\to y'}\lambda_y(z_n(m))/\lambda_{y_m}(z_n(m))} > 0.
\end{align*} 
Hence, \ref{part:positive_limit} holds.

Now assume that $R \not \in \mathbb{T}^{S,1}_{\{x_n\}}$.  \reply{Hence, by the definition $\mathbb{T}^{S,1}_{\{x_n\}}$} there exists an $1\leq \ell \le k$ with $y_\ell \in T^{S,j}_{\{z_n(\ell)\}}$ for $j > 1$.  We now consider the following single term from \eqref{eq:product}
\begin{equation}\label{eq:5678765}
    \frac{\lambda_{y_\ell\to y'_\ell}(z_n(\ell))}{\overline{\lambda}(z_n(\ell))}\mathbbm{1}_{\{\overline{\lambda}(z_n(\ell))>0\}}.
\end{equation}
There are two cases.  If all  intensity functions are identically equal to zero at $z_n(\ell)$, then the term \eqref{eq:5678765} is zero.  If not all of the intensity functions are zero, then, because the network is weakly reversible, there is a source complex $y\in T^{S,1}_{\{z_n(\ell)\}}$, in which case
\reply{\[
\lim_{n\to \infty} \frac{\lambda_{y_\ell\to y'_\ell}(z_n(\ell))}{\overline{\lambda}(z_n(\ell))}\mathbbm{1}_{\{\overline{\lambda}(z_n(\ell))>0\}} \le \lim_{n\to \infty} \frac{\lambda_{y_\ell\to y'_\ell}(z_n(\ell))}{\lambda_{y\to y'}(z_n(\ell))}= 0 \]}
by definition of S-type tiers. Hence, \ref{part:zero_limit} is shown since all the terms in \eqref{eq:product} are less than or equal to one.
\end{proof}

Further useful properties of tiers along sequences of paths are given in the following lemmas.

\begin{lem}\label{lem : Td1 stays same after jumps}
Let $X$ be the Markov process associated with a mass action system $(\S,\C,\Re, \kappa)$, with $X(0)=x_0$. Suppose that $w$ is a fixed vector and that both $\{x_n\}$ and $\{x_n+w\}$ are proper tier sequences for $X$. 
Suppose their D-type tier partitions are $\{T^{D,j}_{\{x_n\}}\}_{j=1}^{J_1}$ and $\{T^{D,j}_{\{x_n+w\}}\}_{j=1}^{J_2}$, respectively.  Then $J_1 =J_2$ and $T^{D,j}_{\{x_n\}} = T^{D,j}_{\{x_n+w\}}$ for each $j$. 
\end{lem}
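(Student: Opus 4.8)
The plan is to reduce the equality of the two D-type partitions to the statement that the shift by $w$ preserves the pairwise tier relations $\sim_D$ and $\succ_D$, and then to establish that preservation by a coordinatewise comparison of $(x_n\vee 1)$ and $((x_n+w)\vee 1)$. First I would split the coordinate indices $\{1,\dots,d\}$ according to the behaviour of $\{x_n\}$. Since $\{x_n\}$ is a proper tier sequence, Definition~\ref{def_tier-seqence}(ii) guarantees $\lim_{n\to\infty} x_{n,i}$ exists in $[0,\infty]$ for each $i$; let $I_\infty$ be the indices with $\lim_{n\to\infty} x_{n,i}=\infty$ and $I_{fin}$ its complement. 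The key discrete observation is that for $i\in I_{fin}$ the sequence $x_{n,i}$ is a sequence of nonnegative integers with a finite limit, hence eventually constant, say equal to $c_i$; and since $\{x_n+w\}$ lies in $\Z^d_{\ge 0}$, the coordinate $x_{n,i}+w_i$ is eventually the constant $c_i+w_i\ge 0$. For $i\in I_\infty$ one has both $x_{n,i}\to\infty$ and $x_{n,i}+w_i\to\infty$.

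Next, writing $a_{n,i}=(x_n\vee 1)_i$ and $b_{n,i}=((x_n+w)\vee 1)_i$ (both $\ge 1$, hence strictly positive, so no division by zero occurs), I would record, for any pair of complexes $y,y'\in\C$, the identity
$$\frac{((x_n+w)\vee 1)^{y}}{((x_n+w)\vee 1)^{y'}}=\Big(\prod_{i=1}^d (b_{n,i}/a_{n,i})^{y_i-y'_i}\Big)\cdot \frac{(x_n\vee 1)^{y}}{(x_n\vee 1)^{y'}}.$$
The correction factor converges to a strictly positive finite constant $D_{y,y'}\in(0,\infty)$: for $i\in I_\infty$ we have $b_{n,i}/a_{n,i}=(x_{n,i}+w_i)/x_{n,i}\to 1$, while for $i\in I_{fin}$ the ratio is eventually the fixed positive number $\max\{c_i+w_i,1\}/\max\{c_i,1\}$.

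Then I would invoke the existence of the D-type partition along $\{x_n\}$: by Definition~\ref{def31} the limit $L=\lim_{n\to\infty}(x_n\vee 1)^{y}/(x_n\vee 1)^{y'}$ exists in $[0,\infty]$ for \emph{every} pair $y,y'\in\C$ (positive finite when $y\sim_D y'$, and $0$ or $\infty$ across tiers). Multiplying by $D_{y,y'}\in(0,\infty)$ preserves each of the three cases ``positive finite / zero / infinity.'' Hence $y\sim_D y'$ along $\{x_n\}$ if and only if $y\sim_D y'$ along $\{x_n+w\}$, and likewise $y\succ_D y'$ along $\{x_n\}$ if and only if $y\succ_D y'$ along $\{x_n+w\}$. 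Since, by Definition~\ref{def31}(i)--(ii), $\sim_D$ is exactly the ``same-tier'' equivalence relation and $\succ_D$ is exactly the ordering of the tiers, the equivalence classes and their ordering coincide for the two sequences, yielding $J_1=J_2$ and $T^{D,j}_{\{x_n\}}=T^{D,j}_{\{x_n+w\}}$ for every $j$.

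The only genuinely delicate step is the claim that the correction factor has a positive finite limit; everything else is bookkeeping in terms of the pairwise relations. That step rests entirely on the discrete fact that an integer-valued sequence with a finite limit is eventually constant, which is what prevents the bounded coordinates from contributing a vanishing or exploding factor, together with $x_{n,i}\to\infty \Rightarrow x_{n,i}+w_i\to\infty$ for the unbounded coordinates, so that $w$ washes out of the ratio there. I would be careful to note that all $a_{n,i},b_{n,i}\ge 1$ and that $x_{n,i}+w_i\ge 0$ because $\{x_n+w\}$ is a sequence of genuine states, so the products and quotients above are well defined for all large $n$.
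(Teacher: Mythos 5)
Your proposal is correct and follows essentially the same route as the paper's proof: reduce the equality of partitions to the preservation of the pairwise relations $\sim_D$ and $\succ_D$, and observe that replacing $x_n$ by $x_n+w$ does not alter the relevant limits in Definition~\ref{def31}. The paper states this last step in one sentence without justification, whereas you supply the missing details (the multiplicative correction factor converging to a constant in $(0,\infty)$, via eventual constancy of integer-valued bounded coordinates and the ratio tending to $1$ for unbounded ones), which is a more careful rendering of the same idea rather than a different argument.
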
	
\begin{proof}
To show that the D-type partitions along $\{x_n\}$ and $\{x_n+w\}$ coincide, it suffices to show that if $y\succ_D y'$ for the D-type partitions along $\{x_n\}$, then $y\succ_D y'$ for the D-type partitions along $\{x_n+w\}$. This holds because the limits defined in part 2 of Definition \ref{def31} do not change if $x_n$ is replaced by $x_n+w$.
\end{proof}
\begin{lem}\label{lem : Ts1 after jump}
Let $X$ be the Markov process associated with a mass action system $(\S,\C,\Re, \kappa)$, with $X(0)=x_0$. Let $y\to y' \in \Re$ and assume that both $\{x_n\}$ and $\{x_n+y'-y\}$ are proper tier sequences of $X$. Moreover, assume that $y \not \in \Tsinf$ and $y' \in \Td1$. Then $y' \in T^{S,1}_{\{x_n+y'-y\}}$.
\end{lem}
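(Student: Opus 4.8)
Write $w=y'-y$ and $u_n=x_n+w$, so that $\{u_n\}$ is the shifted proper tier sequence. The plan is to transport the D-type information onto $\{u_n\}$, and then upgrade it to S-type information using the fact that along a proper tier sequence the mass-action rate $\lambda_z$ and the monomial $(u_n\vee 1)^z$ have the same growth order for every complex $z$ that is not in $T^{S,\infty}_{\{u_n\}}$.

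First I would apply Lemma~\ref{lem : Td1 stays same after jumps} with the fixed vector $w$: since both $\{x_n\}$ and $\{u_n\}$ are proper tier sequences, their D-type partitions coincide tier by tier, so the hypothesis $y'\in\Td1$ immediately gives $y'\in T^{D,1}_{\{u_n\}}$. Second, I would verify that $y'\notin T^{S,\infty}_{\{u_n\}}$. By Definition~\ref{def32}(i), the assumption $y\notin\Tsinf$ means $\lambda_y(x_n)\neq 0$, i.e.\ $x_n\geq y$ componentwise, for every $n\geq 1$. Adding $w$ yields $u_n\geq y'$ for all $n$, hence $\lambda_{y'}(u_n)>0$ for all $n$, so $y'\in\bigcup_i T^{S,i}_{\{u_n\}}$.

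The core step is a coordinatewise comparison: for any $z\notin T^{S,\infty}_{\{u_n\}}$ I would show that $\lim_{n\to\infty}\lambda_z(u_n)/(u_n\vee 1)^z$ exists and lies in $(0,\infty)$. This factorizes over coordinates: each coordinate $i$ with $x_{n,i}\to\infty$ contributes a factor tending to $1$, while every remaining coordinate is eventually constant (a convergent integer sequence), and there the nonvanishing of $\lambda_z(u_n)$ forces $u_{n,i}\geq z_i$, so that coordinate contributes a fixed positive ratio. Writing, for $z,z'\notin T^{S,\infty}_{\{u_n\}}$,
\[
\frac{\lambda_z(u_n)}{\lambda_{z'}(u_n)}=\frac{\lambda_z(u_n)}{(u_n\vee 1)^{z}}\cdot\frac{(u_n\vee 1)^{z}}{(u_n\vee 1)^{z'}}\cdot\frac{(u_n\vee 1)^{z'}}{\lambda_{z'}(u_n)},
\]
the two outer factors have positive finite limits, so the S-type and D-type orderings agree on $\C\setminus T^{S,\infty}_{\{u_n\}}$. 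Taking $z'=y'$ and any $z\notin T^{S,\infty}_{\{u_n\}}$: since $y'\in T^{D,1}_{\{u_n\}}$, the middle factor $(u_n\vee 1)^{z}/(u_n\vee 1)^{y'}$ has a finite limit by Definition~\ref{def31}, whence $\lambda_z(u_n)/\lambda_{y'}(u_n)$ has a finite limit for every such $z$. This is exactly the condition that $y'$ lies in the top S-type tier, i.e.\ $y'\in T^{S,1}_{\{u_n\}}=T^{S,1}_{\{x_n+y'-y\}}$.

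I expect the only delicate point to be the comparison step: one must check that the coordinates that do not blow up are eventually constant, and that the nonvanishing of the rate keeps each such coordinate's ratio bounded away from $0$ rather than collapsing to $0$ (which is precisely what distinguishes the $T^{S,\infty}$ complexes). Note that this argument is purely coordinatewise and never uses binarity, so it applies to general mass-action systems; everything else reduces to Lemma~\ref{lem : Td1 stays same after jumps} and routine bookkeeping with the limits in Definitions~\ref{def31} and \ref{def32}.
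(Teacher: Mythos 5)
Your proof is correct, and its skeleton matches the paper's: both arguments establish $y' \in T^{D,1}_{\{x_n+y'-y\}}$ via Lemma~\ref{lem : Td1 stays same after jumps}, and both use the chain of equivalences $y \notin \Tsinf \Leftrightarrow x_n \ge y \Leftrightarrow x_n + y' - y \ge y'$ to conclude $y' \notin T^{S,\infty}_{\{x_n+y'-y\}}$. The only divergence is the last step: the paper passes from ``$y' \in T^{D,1}_{\{x_n+y'-y\}}$ and $y' \notin T^{S,\infty}_{\{x_n+y'-y\}}$'' to ``$y' \in T^{S,1}_{\{x_n+y'-y\}}$'' by citing \cite[Corollary~7]{AK2018}, whereas you prove that implication inline. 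Your coordinatewise claim that $\lambda_z(u_n)/(u_n \vee 1)^z$ has a positive finite limit for every $z \notin T^{S,\infty}_{\{u_n\}}$ is precisely \cite[Lemma~6]{AK2018} (invoked elsewhere in the paper, e.g., in the proofs of Lemma~\ref{lem : limit of prob x difference V} and Theorem~\ref{thm:main}), and your three-factor decomposition of $\lambda_z(u_n)/\lambda_{y'}(u_n)$ is exactly how that corollary follows from it. Your justification of the inlined step is sound: by Definition~\ref{def_tier-seqence}, the non-exploding coordinates of a proper tier sequence converge and hence (being integers) are eventually constant, and nonvanishing of $\lambda_z(u_n)$ pins each such coordinate's factor at a fixed positive value. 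So the trade-off is simply self-containedness versus brevity; mathematically the two proofs are the same argument.
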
	

\reply{
To prove Lemma~\ref{lem : Ts1 after jump} we will make use of the following result, which follows immediately from  \cite[Corollary 7]{AK2018}.

\begin{lem}\label{lem:alghkadjbhvjd}
    Let $\{x_n\}$ be a proper tier-sequence of a reaction network $(\S,\C,\Re)$.  Suppose $y \notin T^{S,\infty}_{\{x_n\}}$ and $y \in T^{D,1}_{\{x_n\}}$. Then $y \in T^{S,1}$ and $\lim_{n\to \infty} \lambda_y(x_n) = \infty$.
\end{lem}

We are now ready to prove Lemma~\ref{lem : Ts1 after jump}.}
\begin{proof}[Proof of Lemma~\ref{lem : Ts1 after jump}]
 We have that $y \not \in \Tsinf$, which  is equivalent to $x_n \ge y$, which is in turn equivalent to $x_n + y' - y \ge y'$, which implies  $y' \not \in T^{S,\infty}_{\{x_n+y'-y\}}$. Now we must just show that $y'$ actually yields one of the largest intensities on the sequence $\{x_n+y'-y\}$.

 Because $y' \in \Td1$,  Lemma \ref{lem : Td1 stays same after jumps} implies $y' \in T^{D,1}_{\{x_n+y'-y\}}$. Since $y' \notin T^{S,\infty}_{\{x_n+y'-y\}}$,  Lemma~\ref{lem:alghkadjbhvjd} implies $y'\in T^{S,1}_{\{x_n+y'-y\}}$, and the result is shown.
\end{proof}

\section{Tiers and positive recurrence}\label{sec:positive_rec}

The goal of this section is to prove the following theorem.

\begin{thm}\label{thm:main2}
Consider a weakly reversible mass action system $(\S,\C,\Re, \kappa)$ that has a single linkage class, and let $X$ be the associated continuous time Markov chain, with $X(0)=x_0$. Suppose 
\begin{equation}\label{hypo1}
T^{S,1}_{\{z_n\}} \subset T^{D,1}_{\{z_n\}}
\quad\text{for any proper tier sequence $\{z_n\}$ of $X$.}
\end{equation}
Then $X$ is positive recurrent.
\end{thm}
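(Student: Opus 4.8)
The plan is to deduce positive recurrence of $X$ from a Foster--Lyapunov drift condition for the $r$-step embedded chain, with $r=|\Re|$, using the entropy-like function $V$ of \eqref{eq:MainLyapunov}. After disposing of the trivial case $\overline{\lambda}(x_0)=0$ (in which $\state=\{x_0\}$), Lemma \ref{lem : minimum of intensities} gives $\inf_{x\in\state}\overline{\lambda}(x)>0$, so by Theorem \ref{thm : relation between conti and disc} it suffices to prove that $\tilde X$ restricted to $\state$ is positive recurrent. Since a chain is positive recurrent whenever its $r$-step skeleton is, and $V\ge 0$ is finite on $\state$, the whole problem reduces to exhibiting a finite set $K\subset\state$ and $\varepsilon>0$ such that $E_x[V(\tilde X_r)]-V(x)\le-\varepsilon$ for all $x\in\state\setminus K$, which I will establish via the discrete-time Foster--Lyapunov criterion.

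I would argue by contradiction. If no such $K,\varepsilon$ exist, there is a sequence $\{x_n\}\subset\state$ with $|x_n|\to\infty$ and $\liminf_n\big(E_{x_n}[V(\tilde X_r)]-V(x_n)\big)\ge0$. Applying Lemma \ref{lem21} repeatedly, I pass to a subsequence so that $\{x_n+w\}$ is a proper tier sequence for each of the finitely many $w$ ranging over partial sums of reaction vectors along ordered $r$-tuples from $\Re$. Decomposing the increment over ordered reaction sequences $R=(y_1\to y_1',\dots,y_r\to y_r')$,
\[
E_{x_n}[V(\tilde X_r)]-V(x_n)=\sum_{R}P_{x_n}(R)\,\big(V(x_n+\zeta_R)-V(x_n)\big),\qquad \zeta_R=\textstyle\sum_j(y_j'-y_j),
\]
I record two asymptotic facts. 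First, $\zeta_R$ is a bounded sum of $r$ fixed reaction vectors, so $V(x_n+\zeta_R)-V(x_n)=\sum_i\zeta_{R,i}\ln(x_{n,i}\vee 1)+O(1)=O(\log|x_n|)$. Second, by Lemma \ref{lem : prob of r jumps} the paths $R\notin\mathbb{T}^{S,1}_{\{x_n\}}$ have $P_{x_n}(R)\to0$, and in fact polynomially fast (a ratio $\lambda_{y_\ell}/\lambda_{y}$ with $y\in T^{S,1}$), so their contribution $P_{x_n}(R)\cdot O(\log|x_n|)\to0$. Hence only the most-likely paths $R\in\mathbb{T}^{S,1}_{\{x_n\}}$ survive in the limit.

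It remains to analyze these paths and to produce one that strictly decreases $V$. By Lemma \ref{lem : Td1 stays same after jumps} the D-type partition is identical along every shifted sequence $\{z_n(m)\}$, $z_n(m)=x_n+\sum_{j<m}(y_j'-y_j)$, so I may speak of a fixed top tier $T^{D,1}$. For $R\in\mathbb{T}^{S,1}_{\{x_n\}}$ each source satisfies $y_m\in T^{S,1}_{\{z_n(m)\}}\subset T^{D,1}$ by hypothesis \eqref{hypo1}; telescoping, the $m$-th increment equals $\ln\big((x_n\vee1)^{y_m'}/(x_n\vee1)^{y_m}\big)+O(1)$, which is $O(1)$ when $y_m'\in T^{D,1}$, tends to $-\infty$ when $y_m'$ lies in a strictly lower tier, and can never tend to $+\infty$ since $y_m$ is already in the top tier. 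Thus every most-likely path keeps $V(x_n+\zeta_R)-V(x_n)$ bounded above, dropping to $-\infty$ as soon as one reaction lowers the D-tier. To guarantee such a drop, note that Lemma \ref{lem: Tdi exists along tier sequence} forces $T^{D,1}\subsetneq\C$. Let $W\subset T^{D,1}$ be the complexes reachable from $T^{S,1}_{\{x_n\}}$ by directed paths lying entirely in $T^{D,1}$; by Lemma \ref{lem : Ts1 after jump} each step of such a path is a genuine most-likely reaction. If no reaction left $W$, then $W$ would be a nonempty absorbing vertex set of the reaction graph, which, being strongly connected (weak reversibility with a single linkage class), forces $W=\C$, contradicting $T^{D,1}\subsetneq\C$. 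Hence some reaction leaves $W$, and by construction it must leave $T^{D,1}$, i.e.\ drop the tier. Routing a simple path inside $T^{D,1}$ (length $\le|\C|\le r$) to the tail of this reaction, firing it, and padding with arbitrary most-likely reactions up to $r$ steps yields $R_0\in\mathbb{T}^{S,1}_{\{x_n\}}$ with $P_{x_n}(R_0)\to p>0$ and $V(x_n+\zeta_{R_0})-V(x_n)\to-\infty$. Since all most-likely contributions are bounded above and the remainder vanishes, $E_{x_n}[V(\tilde X_r)]-V(x_n)\to-\infty$, contradicting $\liminf\ge0$.

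I expect the graph-theoretic step to be the main obstacle: namely, proving that strong connectivity forces a tier-dropping reaction that is actually reachable through most-likely reactions (the identification of $W$, its absorbing property, and the invocation of Lemmas \ref{lem : Ts1 after jump} and \ref{lem: Tdi exists along tier sequence}), together with the bookkeeping ensuring that no most-likely path can raise $V$ without bound so that the single dropping path dominates the expectation.
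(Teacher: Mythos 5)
Your overall architecture coincides with the paper's: reduce positive recurrence of $X$ to a Foster--Lyapunov drift condition for the $r$-step embedded chain, decompose $E_{x_n}[V(\tilde X_r)]-V(x_n)$ over ordered $r$-tuples of reactions, show that tuples in $\mathbb{T}^{S,1}_{\{x_n\}}$ contribute terms that are bounded above with at least one tending to $-\infty$, and produce that tier-dropping tuple by a graph argument. Your reachable-set $W$ construction is essentially the paper's Lemma~\ref{lem:existence of a set of source in D and Ts1 } (which takes a directed path from a complex in $T^{S,1}_{\{x_n\}}$ to a complex outside $T^{D,1}_{\{x_n\}}$, truncates at the first tier drop, and pads with top-$S$-tier reactions), and your handling of the most likely tuples matches Lemma~\ref{lem : limit of prob x difference V}\ref{part:infinite_increment}. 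That part of the proposal is sound.

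The gap is in your treatment of the tuples $R\notin\mathbb{T}^{S,1}_{\{x_n\}}$. You assert that $P_{x_n}(R)\to 0$ ``polynomially fast'' because it is dominated by a ratio $\lambda_{y_\ell}/\lambda_{y}$ with $y\in T^{S,1}$, and conclude that $P_{x_n}(R)\cdot O(\log|x_n|)\to 0$. But Definition~\ref{def32} gives no rate whatsoever: S-type tiers only guarantee that the ratio tends to $0$, and the sequence $\{x_n\}$ is handed to you by the contradiction hypothesis, so it is essentially adversarial. A ratio of mass-action intensities can tend to zero arbitrarily slowly along such a sequence: for example, if $x_{n,1}=\lfloor x_{n,2}/\ln\ln x_{n,2}\rfloor$ with $x_{n,2}\to\infty$, then $\lambda_{S_1}(x_n)/\lambda_{S_2}(x_n)\sim 1/\ln\ln|x_n|$, which is not $o(1/\log|x_n|)$; a tuple whose net effect increases a species of size $|x_n|$ would then contribute on the order of $\log|x_n|/\ln\ln|x_n|\to+\infty$. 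So not only does the remainder not obviously vanish, you have not even shown it is bounded above, and bounded above is what the contradiction genuinely needs: a single bad term diverging to $+\infty$ could offset your good term's $-\infty$. This is precisely what the paper's Lemma~\ref{lem : limit of prob x difference V}\ref{part:bounded_increment} supplies, and its proof is not a rate argument: each term is rewritten as a prefactor times $\psi_n\left(\ln(1/\psi_n)+\ln\phi_n+C\right)$, the quantities $\psi_n,\phi_n$ are shown to be bounded using hypothesis \eqref{hypo1} together with \cite[Lemma 6]{AK2018} (under \eqref{hypo1} the monomial $(z_n(m)\vee 1)^{y}$ of \emph{every} complex $y$ is dominated by $\overline\lambda(z_n(m))$ along the shifted sequences), and one then uses that $\psi\ln(1/\psi)$ is bounded above for bounded $\psi$. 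In short, controlling the non-most-likely tuples requires a real argument that itself invokes \eqref{hypo1}; the unquantified decay you substitute for it does not close the proof.
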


To prove this result, we require a number of preliminary lemmas.

\begin{lem}\label{lemma:main}
Let $\{x_n\}$ be a sequence of points in $\Z^d_{\ge 0}$ be a sequence such that $\lim_{n\to \infty} |x_n|=\infty$.  Let $I	= \{ i \ |\  x_{n,i} \rightarrow \infty,  \ \ \textrm{as} \ \ n \rightarrow \infty \}$ and assume that $I \ne \emptyset$ and that $I^c = \{i \ | \ \sup_{n} x_{n,i} < \infty\}.$
Let $\eta \in \R^d$ be such that $x_n + \eta \in \R^d_{\ge 0}$ for each $n \ge 1$, and let $V$ be defined as in \eqref{eq:MainLyapunov}. Then there exists a positive constant $C$ such that for all $n$
\begin{align*}
  V(x_n+\eta)-V(x_n) \le \ln{( (x_n\vee 1)^\eta )}+C.
\end{align*} 
 \end{lem}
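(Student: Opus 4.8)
The plan is to reduce the claim to a coordinate-by-coordinate estimate and then sum over the (finitely many) coordinates. Write $g(t)=t(\ln t-1)+1$ for $t\ge 0$, with $g(0)=1$, so that $V(x)=\sum_{i=1}^d g(x_i)$ and $g'(t)=\ln t$ for $t>0$. Since $\ln\big((x_n\vee 1)^\eta\big)=\sum_{i=1}^d \eta_i\ln(x_{n,i}\vee 1)$, it suffices to find for each index $i$ a constant $C_i$ with
\[
g(x_{n,i}+\eta_i)-g(x_{n,i})\le \eta_i\ln(x_{n,i}\vee 1)+C_i\qquad\text{for all }n,
\]
and then take $C=\sum_{i=1}^d C_i$. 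I would split the proof according to whether $i\in I$ or $i\in I^c$.

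For $i\in I^c$ the estimate is soft: the values $x_{n,i}$ lie in a bounded subset of $\Z_{\ge 0}$, and by hypothesis $x_{n,i}+\eta_i$ lies in the bounded set $[0,\sup_n x_{n,i}+\eta_i]\subset\R_{\ge 0}$. Since $g$ is continuous, hence bounded, on any compact interval, and $\eta_i\ln(x_{n,i}\vee 1)$ is likewise bounded, the quantity $g(x_{n,i}+\eta_i)-g(x_{n,i})-\eta_i\ln(x_{n,i}\vee 1)$ is uniformly bounded in $n$; any upper bound for it serves as $C_i$.

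The substantive step is the case $i\in I$, where $x_{n,i}\to\infty$. Here I would use the exact representation $g(x_{n,i}+\eta_i)-g(x_{n,i})=\int_{x_{n,i}}^{x_{n,i}+\eta_i}\ln t\,dt$, valid once $n$ is large enough that both endpoints are positive, and rewrite it as $\eta_i\ln x_{n,i}+\int_{x_{n,i}}^{x_{n,i}+\eta_i}\ln(t/x_{n,i})\,dt$. On the interval of integration one has $|t-x_{n,i}|\le|\eta_i|$, so $\ln(t/x_{n,i})=\ln\!\big(1+O(1/x_{n,i})\big)\to 0$ uniformly in $t$, and therefore the remainder integral is bounded (in fact it tends to $0$). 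Because $x_{n,i}\vee 1=x_{n,i}$ for all large $n$, this yields the required bound for all large $n$.

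The one point requiring care is making the bound genuinely uniform in $n$ rather than merely asymptotic: for $i\in I$ the integral representation and the switch from $x_{n,i}$ to $x_{n,i}\vee 1$ are only justified once $x_{n,i}\ge 1$ and $x_{n,i}+\eta_i\ge 0$, so I would absorb the finitely many small-$n$ terms (each of which is finite) into $C_i$, using a single threshold $N=\max_{i\in I}N_i$ since $I$ is finite. Summing the coordinate bounds over the $d$ indices then gives the inequality with $C=\sum_{i=1}^d C_i$, completing the argument. I expect this last book-keeping step—controlling the logarithmic increment of $g$ over a fixed-length window whose left endpoint escapes to infinity while keeping everything uniform in $n$—to be the main obstacle, though it is ultimately routine.
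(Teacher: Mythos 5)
Your proof is correct and follows essentially the same route as the paper's: split the coordinates into $I$ and $I^c$, absorb the (uniformly bounded) $I^c$ contribution into a constant, and show that for $i\in I$ the increment equals $\eta_i\ln x_{n,i}$ plus an error vanishing as $n\to\infty$, with the finitely many small-$n$ terms absorbed into the constant. The only immaterial difference is that you obtain the per-coordinate asymptotics from the integral representation $g(x_{n,i}+\eta_i)-g(x_{n,i})=\int_{x_{n,i}}^{x_{n,i}+\eta_i}\ln t\,dt$, whereas the paper expands the difference algebraically and invokes $\lim_{t\to\infty}\left(1+\tfrac{\alpha}{t}\right)^{t}=e^{\alpha}$.
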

\begin{proof} 
Since the terms associated with $I^c$ are uniformly bounded,
there exists a $C_1\ge 0$ such that
\begin{align}
V(x_n+\eta)-V(x_n) &\le \dsum_{i\in I}[ (x_{n,i}+\eta_i)(\ln{(x_{n,i}+\eta_i)}-1) - x_{n,i}(\ln{(x_{n,i})}-1)] + C_1\notag\\
&=  \dsum_{i\in I}\left[ x_{n,i}\ln \left(1+\frac{\eta_i}{x_{n,i}}\right)-\eta_i + \eta_i\ln{(x_{n,i}+\eta_i)}\right] + C_1\notag.
\end{align}
 Using the fact that $\lim_{t \to \infty} (1 + \tfrac{\alpha}{t})^t = e^\alpha$ for any $\alpha$, we have  that
\[
x_{n,i}\ln \left(1+\frac{\eta_i}{x_{n,i}}\right)-\eta_i \rightarrow 0, \ \ \ \textrm{as} \ \ n \rightarrow \infty,
\]
 for each $i\in I$.
Hence, there is a  $C_2>0$ for which 
\[
V(x_n+ \eta)-V(x_n) \le \dsum_{i \in I} \eta_i\ln (x_{n,i}+\eta_i) + C_2,
\]
for each $n$.  The limit
\[
	\ln \left[ (x_{n,i}+\eta_i)^{\eta_i}\right] - \ln(x_{n,i}^{\eta_i}) \to 0, 
\]
as $n\to \infty$ then implies the existence of a $C_3>0$  for which 
\begin{align*}
V(x_n+\eta)&-V(x_n)   \le \sum_{i \in I}\ln \left(  x_{n,i}^{\eta_i}\right) + C_3 = \ln \left(  \prod_{i\in I} x_{n,i}^{\eta_i}\right) + C_3.
\end{align*}
Finally, we may add back in the bounded terms to conclude that there is a $C>0$ for which 
\begin{align*}
V(x_n+\eta)&-V(x_n) \le   \ln{\left(\dprod_{i=1}^d (x_{n,i} \vee 1)^{\eta_i}\right)} + C = \ln{(x_n\vee 1)^\eta} +C,
\end{align*}
and the proof is complete.
\end{proof}

\begin{lem}\label{lem : limit of prob x difference V}
	Let $X$ be the Markov process associated with a weakly reversible mass action system $(\S,\C,\Re, \kappa)$, with $X(0)=x_0$. Suppose \eqref{hypo1} holds and let
  $\tilde{X}$ be the embedded discrete-time Markov chain of $X$. \reply{Let $k$ be a finite, positive integer and let $R=\{y_1\to y'_1$,$y_2\to y'_2,\dots,y_k\to y'_k\}$ be a sequence of elements of $\Re$ of length $k$.}  \reply{Let $\{x_n\}$ be a fixed sequence such that $\{x_n + \sum_{j=1}^{i-1}(y'_j-y_j)\}$ is a proper tier sequence of $X$ for each $i=1,2,\dots,k$.
  }
  Then, for the function $V$ defined as \eqref{eq:MainLyapunov}
\begin{enumerate}[label=(\roman*)]
\item\label{part:bounded_increment} there is a constant $K>0$ such that
\begin{align*}
 P_{x_n}\left (x_n \xrightarrow{y_1\to y_1',\dots,y_k\to y'_k} x_n+\sum_{j=1}^k(y'_j-y_j)\right )\left(V\left(x_n+\sum_{j=1}^k (y'_j-y_j)\right)-V(x_n)\right) \le K,
\end{align*}
for all $n \ge 1$, and
\item\label{part:infinite_increment} if $R \in  \mathbb{T}^{S,1}_{\{x_n\}}\cap \mathbb{D}_{\{x_n\}}$, then
\begin{align*}
\hspace{-0.4in}\lmt P_{x_n}\left (x_n \xrightarrow{y_1\to y_1',\dots,y_k\to y'_k} x_n+\sum_{j=1}^k(y'_j-y_j)\right )\left(V\left(x_n+\sum_{j=1}^k (y'_j-y_j)\right)-V(x_n)\right) = -\infty.
\end{align*}
\end{enumerate}
\end{lem}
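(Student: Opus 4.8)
The plan is to reduce both parts to a single per-reaction estimate by telescoping $V$ along the path and invoking Lemma~\ref{lemma:main}. Write $\eta=\sum_{j=1}^k(y_j'-y_j)$ and, as in the proof of Lemma~\ref{lem : prob of r jumps}, set $z_n(m)=x_n+\sum_{j=1}^{m-1}(y_j'-y_j)$, so that each $\{z_n(m)\}$ is a proper tier sequence, $z_n(m+1)=z_n(m)+(y_m'-y_m)$, and the path probability $P_{x_n}(\mathrm{path})$ appearing in the statement equals $\prod_{m=1}^k p_m$ with $p_m=\lambda_{y_m\to y_m'}(z_n(m))/\overline\lambda(z_n(m))\in[0,1]$ by \eqref{eq:product}. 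I first record the asymptotics of the ingredients. Whenever $y_m\notin T^{S,\infty}_{\{z_n(m)\}}$, the mass-action form \eqref{mass} gives $\lambda_{y_m}(z_n(m))\asymp (z_n(m)\vee1)^{y_m}$, meaning the ratio stays between positive constants (components in $I$ grow polynomially, those in $I^c$ stay bounded away from $0$ and $\infty$); if instead $y_m\in T^{S,\infty}_{\{z_n(m)\}}$ then $p_m=0$ and that path probability vanishes. Crucially, hypothesis \eqref{hypo1} applied to the proper tier sequence $\{z_n(m)\}$ forces the top S-tier to lie inside the top D-tier, so the normalizer satisfies $\overline\lambda(z_n(m))\asymp\max_{y\in\C}(z_n(m)\vee1)^{y}$; in particular $\overline\lambda(z_n(m))\gtrsim (z_n(m)\vee1)^{y}$ for \emph{every} complex $y$, and $p_m\asymp (z_n(m)\vee1)^{y_m}/\overline\lambda(z_n(m))$.

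The heart of the argument is the single-reaction estimate: there is a constant $K_0$ such that, writing $\ln^+=\max\{\ln,0\}$,
\[
p_m\,\ln^+\!\Big(\tfrac{(z_n(m)\vee1)^{y_m'}}{(z_n(m)\vee1)^{y_m}}\Big)\le K_0
\qquad\text{for all }m,n.
\]
Indeed, since $y_m'$ is a complex, $(z_n(m)\vee1)^{y_m'}\lesssim\overline\lambda(z_n(m))$, so with $w_n=\overline\lambda(z_n(m))/(z_n(m)\vee1)^{y_m}$ (which is bounded below by a positive constant, because $\overline\lambda$ dominates every monomial, in particular $(z_n(m)\vee1)^{y_m}$) the left-hand side is $\asymp w_n^{-1}\ln^+(\alpha w_n)$ for a constant $\alpha$, and $t\mapsto t^{-1}\ln^+(\alpha t)$ is uniformly bounded on $[w_0,\infty)$. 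This is precisely where \eqref{hypo1} is indispensable: without it the product monomial $(z_n(m)\vee1)^{y_m'}$ could dominate $\overline\lambda$ and the estimate would fail.

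For part \ref{part:bounded_increment}, telescoping and Lemma~\ref{lemma:main} (applied to each $\{z_n(m)\}$ with increment $y_m'-y_m$) give $V(x_n+\eta)-V(x_n)\le\sum_{m=1}^k\ln\big((z_n(m)\vee1)^{y_m'-y_m}\big)+C'$ for a constant $C'$. Multiplying by $P_{x_n}(\mathrm{path})=\prod_{m'}p_{m'}\le p_m$ and distributing over the sum, each term is bounded above by $p_m\ln^+\big((z_n(m)\vee1)^{y_m'-y_m}\big)+C'$ (if the bracket is negative the term is nonpositive), and the single-reaction estimate bounds the total by $kK_0+C'=:K$. The only point needing care is feasibility: for $n$ with $P_{x_n}(\mathrm{path})=0$ the claim is trivial, and for the remaining $n$ all intermediate states, including $z_n(k+1)=x_n+\eta$, are nonnegative, so Lemma~\ref{lemma:main} applies along that subsequence.

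For part \ref{part:infinite_increment}, assume $R\in\mathbb{T}^{S,1}_{\{x_n\}}\cap\mathbb{D}_{\{x_n\}}$. By Lemma~\ref{lem : prob of r jumps}\ref{part:positive_limit}, $P_{x_n}(\mathrm{path})\to p_\infty>0$. Membership in $\mathbb{D}_{\{x_n\}}$ means every source $y_m\in T^{D,1}_{\{z_n(m)\}}=T^{D,1}_{\{x_n\}}$ (equal by Lemma~\ref{lem : Td1 stays same after jumps}), while some product $y_\ell'\notin T^{D,1}_{\{z_n(\ell)\}}$. In the telescoped bound, each source sits in the top D-tier, so $(z_n(m)\vee1)^{y_m}$ is a maximal monomial and every term $\ln\big((z_n(m)\vee1)^{y_m'-y_m}\big)$ is bounded above; the $\ell$-th term moreover has $y_\ell\succ_D y_\ell'$, hence tends to $-\infty$ by Definition~\ref{def31}. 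Thus $V(x_n+\eta)-V(x_n)\to-\infty$, and since $P_{x_n}(\mathrm{path})$ is eventually bounded below by $p_\infty/2>0$, the product tends to $-\infty$, giving \ref{part:infinite_increment}. The main obstacle throughout is the single-reaction estimate of the second paragraph — balancing the at most logarithmic growth of $V$ against the polynomial decay of the transition probability — which is exactly what \eqref{hypo1} makes possible.
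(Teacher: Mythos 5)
Your proof is correct and takes essentially the same approach as the paper's: telescoping $V$ along the path via Lemma~\ref{lemma:main}, disposing of the case of vanishing path probability through the $T^{S,\infty}$ dichotomy, using \eqref{hypo1} together with weak reversibility to dominate every monomial $(z_n(m)\vee 1)^y$ by $\overline\lambda(z_n(m))$, balancing the logarithmic growth of $V$ against the polynomial decay of the probability via boundedness of $t\mapsto t^{-1}\max\{\ln(\alpha t),0\}$, and proving part (ii) identically (Lemma~\ref{lem : prob of r jumps} plus the drop out of the top D-tier at the index $\ell$). The only difference is bookkeeping: the paper aggregates the estimate into the products $\psi_n$ and $\phi_n$ taken over all $k$ reactions, whereas you bound reaction-by-reaction using $P_{x_n}(\mathrm{path})\le p_m$, which is an equivalent reorganization of the same argument.
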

\begin{proof}
For each $n\geq 1$ and $1\leq m\leq k$, let $z_n(m)=x_n+\sum_{j=1}^{m-1}(y'_j-y_j)$. Note that each $\{z_n(m)\}$ is a proper tier sequence, which implies $z_n(m) \in \R^d_{\ge 0}$ for each $n$ and $m$.  Hence, Lemma \ref{lemma:main} implies that for each proper tier sequence $\{x_n\}$, there is a constant $C>0$ such that for all $n$
\reply{\begin{align}\label{eq : asymptotic of difference of V}
V\left(x_n+\sum_{m=1}^k (y'_j-y_j)\right)-V(x_n) &= \sum_{m=1}^k\Big ( V(z_n(m)+y'_m-y_m)-V(z_n(m)) \Big) \notag\\
&\le \ln{\left ( \prod_{m=1}^k \frac{(z_n(m) \vee 1)^{y'_m}}{(z_n(m) \vee 1)^{y_m}} \right ) }+ C. 
\end{align}}
First we suppose $y_\ell \in T^{S,\infty}_{\{z_n(\ell)\}}$ for some $\ell$, then by \eqref{eq:product}
\begin{align*}
P_{x_n}\left (x_n \xrightarrow{y_1\to y_1',\dots,y_l\to y'_k} x_n+\sum_{j=1}^k(y'_j-y_j)\right )=0,
\end{align*}
and \ref{part:bounded_increment} is shown.

Now assume \reply{$y_m \not \in T^{S,\infty}_{\{z_n(m)\}}$} for all $1\le m\le k$. 
Then, there is at least one complex \reply{$\hat y_m\in T^{S,1}_{\{z_n(m)\}}$} for each $1\le m\le k$. By \eqref{hypo1} and  \cite[Lemma 6]{AK2018}, for any ${y}\in \C$ and any $1\leq m\leq k$
\begin{equation}
\lmt \frac{(z_n(m)\vee 1)^{{y}}}{\lambda_{\hat y_m}(z_n(m))}
= \lmt \frac{(z_n(m)\vee 1)^{{y}}}{(z_n(m)\vee 1)^{\hat y_m}}\frac{(z_n(m)\vee 1)^{\hat y_m}}{\lambda_{\hat y_m}(z_n(m))} < \infty.  \label{eq:bound of psi and phi}
\end{equation}
By \eqref{eq:product} and \eqref{eq : asymptotic of difference of V}, we have
\begin{align}
P_{x_n}\bigg (x_n& \xrightarrow{y_1\to y_1',\dots,y_l\to y'_k} x_n+\sum_{j=1}^k(y'_j-y_j)\bigg )\left(V\left(x_n+\sum_{j=1}^k (y'_j-y_j)\right)-V(x_n)\right) \notag\\
\le & 
\left(\prod_{m=1}^{k}\frac{\kappa_{y_m\to y'_m} \lambda_{y_m}(z_n(m))}{\overline{\lambda}(z_n(m))}\right)\left ( \ln{\left ( \prod_{m=1}^k \frac{(z_n(m) \vee 1)^{y'_m}}{(z_n(m) \vee 1)^{y_m}} \right ) }+ C \right )\notag\\
= &\left(\prod_{m=1}^k\frac{\kappa_{y_m\to y'_m} \lambda_{y_m}(z_n(m))}{(z_n(m)\vee 1)^{y_m}}\right) \psi_n\left(\ln{\left(\frac{1}{\psi_n}\right)}+\ln{(\phi_n)}+C\right),
\label{eq : prob x lya}  
\end{align} 
where
\[ \psi_n = \prod_{m=1}^k \frac{(z_n(m) \vee 1)^{y_m}}{\overline{\lambda}(z_n(m))} \quad \text{and} \quad \phi_n =\prod_{m=1}^k \frac{(z_n(m) \vee 1)^{y'_m}}{\overline{\lambda}(z_n(m))},
\]
and we recall the definition of $\overline \lambda$ from \eqref{lambdabar}.
Since the network is weakly reversible, any complex is a source complex, which in turn implies that $\lambda_{y}(x)\leq \overline{\lambda}(x)$ for any $y\in\C$ and $x\in\Z^d$. By combining this with \eqref{eq:bound of psi and phi}, it follows that there is a constant $C''>0$ such that
\begin{align}
 0<\psi_n \le C'' \quad \text{and} \quad   
 0<\phi_n \le C''    \quad \text{for all} \ \ n. \label{eq : psi and phi}
\end{align}
Hence, \ref{part:bounded_increment} is proven by combining \eqref{eq : prob x lya}, \eqref{eq : psi and phi}, the fact that $\psi\ln{(1/\psi)}$ is bounded from above if $\psi$ is bounded from above, and finally the fact that \reply{$\lambda_y(x)\le x^y \le (x \vee 1)^y$} for any $y\in\C$ and $x\in\Z^d$.

In order to prove \ref{part:infinite_increment}, suppose $R \in  \mathbb{T}^{S,1}_{\{x_n\}}\cap \mathbb{D}_{\{x_n\}}$.   Since $R \in \mathbb{T}^{S,1}_{\{x_n\}}$, Lemma \ref{lem : prob of r jumps} implies 
\[
    \lmt P_{x_n}\left (x_n \xrightarrow{y_1\to y_1',\dots,y_k\to y'_k} x_n+\sum_{j=1}^k(y'_j-y_j)\right )>0.
    \]
    Hence, \ref{part:infinite_increment} is shown if we prove the right-hand side of \eqref{eq : asymptotic of difference of V} goes to $-\infty$, as $n \to \infty.$
Because $R \in \mathbb{D}_{\{x_n\}}$, which implies each \reply{$y_m \in T^{D,1}_{\{z_n(m)\}}$},
we may conclude that each term of the form
\[
 \frac{(z_n(m) \vee 1)^{y'_m}}{(z_n(m) \vee 1)^{y_m}}
\]
is uniformly bounded from above in $n$.  Moreover, and again because $R \in \mathbb{D}_{\{x_n\}}$, there exists $1\leq \ell\leq k$ such that $y_\ell \in T^{D,1}_{z_n(m)}$, but \reply{$y'_\ell \not \in T^{D,1}_{\{z_n(m)\}}$}.  Hence, 
\begin{align*}
\lmt \ln{\left ( \prod_{m=1}^k \frac{(z_n(m) \vee 1)^{y'_m}}{(z_n(m) \vee 1)^{y_m}} \right ) } = -\infty,
\end{align*}
and \ref{part:infinite_increment} is shown.
\end{proof}

\begin{lem}\label{thm : main analytic thm}
Let $k \in \Z_{>0}$ be fixed and let $X$ be the Markov process associated with a weakly reversible mass action system $(\S,\C,\Re, \kappa)$, with $X(0)=x_0$. Suppose \eqref{hypo1} holds and that for any 
 proper tier sequence $\{x_n\}$ of $X$, there is a sequence of $k$ reactions $R=\{y_1\to y'_1,\dots,y_k \to y'_k\}$  
 such that, by potentially considering a subsequence of $\{x_n\}$, the following holds:
\begin{enumerate}[label=(\roman*)]
\item\label{part:nothing} $\{x_n+\sum_{j=1}^{m-1}(y'_j-y_j)\}$ is a proper tier sequence for each $m=1,2,\dots,k$, and 
\item\label{part:good_sequence} $R \in \mathbb{T}^{S,1}_{\{x_n\}} \cap \mathbb{D}_{\{x_n\}}$.
\end{enumerate}
Then $X$ is positive recurrent.
\end{lem}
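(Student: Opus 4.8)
The plan is to establish a Foster--Lyapunov drift condition, with the function $V$ of \eqref{eq:MainLyapunov}, for the $k$-step embedded chain $\{\tilde X_{kn}\}_{n\ge0}$, and then transfer positive recurrence back to $X$. Writing $P^k$ for the $k$-step transition operator of $\tilde X$, I first note that the case $\overline\lambda(x_0)=0$ is trivial (then $x_0$ is absorbing), so I may assume $\overline\lambda(x_0)>0$; weak reversibility and Lemma~\ref{lem : minimum of intensities} then give $\inf_{x\in\state}\overline\lambda(x)>0$, so no state of $\state$ is absorbing and the first $k$ jumps always occur. Consequently, for $x\in\state$,
\[
P^kV(x)-V(x)=\sum_{R}P_x\!\left(x\xrightarrow{y_1\to y_1',\dots,y_k\to y'_k}x+\Delta_R\right)\bigl(V(x+\Delta_R)-V(x)\bigr),
\]
where the (finite) sum runs over all ordered $k$-tuples $R=\{y_1\to y'_1,\dots,y_k\to y'_k\}\subset\Re$ and $\Delta_R=\sum_{j=1}^k(y'_j-y_j)$, inadmissible tuples contributing zero. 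I would aim to show this drift tends to $-\infty$ as $|x|\to\infty$ along $\state$, which yields the drift condition for any threshold.

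Suppose, for contradiction, that there is a sequence $\{x_n\}\subset\state$ with $|x_n|\to\infty$ and $P^kV(x_n)-V(x_n)\ge-M$ for some finite $M$ and all $n$. First I would pass to a proper tier subsequence via Lemma~\ref{lem21}, and then apply the standing hypothesis to extract, after a further subsequence, an ordered set $R^\star$ of $k$ reactions satisfying \ref{part:nothing} and \ref{part:good_sequence}. Since every subsequence of a proper tier sequence is again a proper tier sequence with identical tier structure, the membership $R^\star\in\mathbb{T}^{S,1}_{\{x_n\}}\cap\mathbb{D}_{\{x_n\}}$ is preserved under all subsequent refinements. As there are only finitely many $k$-tuples $R$ and finitely many intermediate shift vectors, one more finite round of subsequence extraction arranges that, for each $R$, the probability $P_{x_n}(x_n\xrightarrow{R}\cdots)$ is either identically zero or positive along the subsequence, and that whenever it is positive all intermediate sequences $\{x_n+\sum_{j=1}^{m-1}(y'_j-y_j)\}$, $m=1,\dots,k$, are proper tier sequences (their norms diverge, so Lemma~\ref{lem21} applies).

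Now I would split the finite drift sum into three groups. The terms with identically zero probability vanish. For every admissible $R\neq R^\star$, the intermediate sequences are proper tier sequences, so part \ref{part:bounded_increment} of Lemma~\ref{lem : limit of prob x difference V} bounds the corresponding term above by a constant, uniformly in $n$; summing the finitely many such terms keeps the bound. Finally, using $R^\star\in\mathbb{T}^{S,1}_{\{x_n\}}\cap\mathbb{D}_{\{x_n\}}$ together with condition \ref{part:nothing}, part \ref{part:infinite_increment} of Lemma~\ref{lem : limit of prob x difference V} forces the term for $R^\star$ to tend to $-\infty$. Hence $P^kV(x_n)-V(x_n)\to-\infty$, contradicting the bound $-M$. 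This establishes $\lim_{|x|\to\infty,\,x\in\state}\bigl(P^kV(x)-V(x)\bigr)=-\infty$, so the $k$-skeleton satisfies a Foster--Lyapunov drift condition and is positive recurrent on $\state$.

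It remains to transfer this conclusion to $X$. Because $\state$ is a single closed communicating class for $\tilde X$ (a standard consequence of weak reversibility), positive recurrence of the $k$-skeleton on a finite subset of $\state$ propagates to positive recurrence of $\tilde X$ itself: fixing a reference state, a first skeleton return at time $km$ is in particular a return of $\tilde X$, so the expected return time of $\tilde X$ is at most $k$ times that of the skeleton and hence finite. With $\tilde X$ positive recurrent on $\state$ and $\inf_{x\in\state}\overline\lambda(x)>0$, Theorem~\ref{thm : relation between conti and disc} gives that $X$ is positive recurrent, as desired. I expect the main obstacle to be the bookkeeping of the second paragraph: one must guarantee, through finitely many subsequence extractions, that every contributing $k$-tuple has proper tier intermediate sequences so that part \ref{part:bounded_increment} applies uniformly, while simultaneously retaining the divergence of the single distinguished term indexed by $R^\star$.
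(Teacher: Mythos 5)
Your proposal is correct and takes essentially the same approach as the paper's proof: a contradiction argument via the Foster--Lyapunov criterion for the $k$-step skeleton $\tilde X_{nk}$, subsequence extraction through Lemma~\ref{lem21} so that Lemma~\ref{lem : limit of prob x difference V} applies to every $k$-tuple in $\Re_k$, a split of the drift sum into finitely many uniformly bounded terms plus the distinguished term for $R^\star$ that diverges to $-\infty$, and transfer of positive recurrence back to $X$ via Theorem~\ref{thm : relation between conti and disc}. Your extra care with absorbing states, zero-probability tuples, and the skeleton-to-$\tilde X$ step only makes explicit what the paper leaves implicit.
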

\begin{proof}
Let $\tilde X_n$, $n\ge 0$, be the embedded chain for $X$.  For $n \ge 0$, define $Z^k_n = \tilde X_{nk}$.  Note that positive recurrence of  $Z^k_n$ implies positive recurrence of $\tilde X_n$, which in turn implies positive recurrence of $X$ via Theorem \ref{thm : relation between conti and disc}. 

Suppose, in order to find a contradiction, that $Z^k_n, n \ge 0,$ is not positive recurrent for some choice of $Z^k_0 = \tilde X_0 = x_0$.  Then, by the usual Foster-Lyapunov criteria \cite{MT-LyaFosterIII}, there exists a sequence of points $x_n\in\state$ with $\lim_{n\to\infty}|x_n| = \infty$ and for which 
\begin{equation}\label{eq:589707}
	E_{x_n}[V(Z^k_{1})] - V(x_n) > -1,
\end{equation}
where $V$ is defined in \eqref{eq:MainLyapunov}.
By Lemma \ref{lem21}, we may choose a subsequence of $\{x_n\}$ that is a proper tier sequence of $X$.  By assumption, there is then a further subsequence, which we will also denote by $\{x_n\}$, and an ordered sequence of reactions $\{y_1 \to y_1', \dots, y_k \to y_k'\}$ for which both \ref{part:nothing} and \ref{part:good_sequence} in the statement of the lemma hold.  Denote by $\Re_k$ all possible sequences of reactions of length $k$ chosen from $\Re$.  We will denote  elements of $\Re_k$ by $\bar{y}_1\to\bar{y}'_1, \dots, \bar{y}_k\to\bar{y}'_k$.  Perhaps after consideration of another subsequence, by applying \lemref{lem21} we may further assume that $\{x_n+\sum_{j=1}^{m-1}(\bar y'_j-\bar y_j)\}$ is a proper tier sequence for each $m=1,2,\dots,k$ and for each element of $\Re_k$, which are finitely many.

%

We may now observe that 
\begin{align*}
E_{x_n}&[V(Z^k_{1})] - V(x_n)\\
= &\sum_{\Re_k} P_{x_n}\left(x_n \xrightarrow{\bar{y}_1\to\bar{y}'_1, \dots, \bar{y}_k\to\bar{y}'_k} x_n+\sum_{j=1}^k(\bar{y}'_j-\bar{y}_j) \right)\left( V\left(x_n+\sum_{j=1}^k(\bar{y}'_j-\bar{y}_j)\right)-V(x_n) \right),
\end{align*}
where $P_{x}$ is a probability measure of $\tilde X$ with an initial point $x$.
Note that by \lemref{lem : limit of prob x difference V}, there exists a constant $\overline K$ such that 
\begin{align*}
&\sup_n P_{x_n}\left(x_n \xrightarrow{\bar{y}_1\to\bar{y}'_1, \dots, \bar{y}_k\to\bar{y}'_k} x_n+\sum_{j=1}^k(\bar{y}'_j-\bar{y}_j) \right)\left(V\left(x_n+\sum_{j=1}^k(\bar{y}'_j-\bar{y}_j)\right)-V(x_n)\right) \le \overline K,
\end{align*}
for each element of $\Re_k$.  Note that the particular $K$ from the conclusion of Lemma \ref{lem : limit of prob x difference V}  depends upon the particular element of $\Re_k$ under consideration.  Therefore, $\overline K$ is the maximum of all such $K$ taken over these elements, which are finitely many.  Next, note that for the particular sequence $y_1\to y_1', \dots, y_k \to y_k'$ satisfying \ref{part:nothing} and \ref{part:good_sequence}, Lemma \ref{lem : limit of prob x difference V} implies that
\begin{equation*}
\lim_{n\to \infty}  P_{x_n}\left(x_n \xrightarrow{{y}_1\to{y}'_1, \dots, {y}_k\to{y}'_k} x_n+\sum_{j=1}^k({y}'_j-{y}_j) \right)\left(V\left(x_n+\sum_{j=1}^k({y}'_j-{y}_j)\right)-V(x_n)\right) =-\infty.
\end{equation*}
We immediately see that the above three equations  contradict \eqref{eq:589707}, and so the result is shown.
\end{proof}

The next lemma states that assumption \eqref{hypo1} actually ensures that conditions \ref{part:nothing} and \ref{part:good_sequence} in Lemma \ref{thm : main analytic thm} hold.

\begin{lem}\label{lem:existence of a set of source in D and Ts1 }
Consider a weakly reversible mass action system $(\S,\C,\Re, \kappa)$ that has a single linkage class, and let $X$ be the associated continuous time Markov chain. Let $X(0)=x_0$ and assume $x_0$ is not an absorbing state. Let $r=|\Re|$ and suppose \eqref{hypo1} holds. 
Let $\{x_n\}$ be a proper tier sequence of $X$. Then, by potentially considering a sub-sequence of $\{x_n\}$, there exists a sequence of reactions $R=\{y_1\to y'_1,y_2\to y'_2,\dots,y_r \to y'_r \}$ such that
\begin{enumerate}[label=(\roman*)]
\item\label{part:nothing2} $\{x_n+\sum_{j=1}^{m-1}(y'_j-y_j)\}$ is a proper tier sequence for each $m=1,2,\dots,r$, and 
\item\label{part:good_sequence2} $R \in \mathbb{T}^{S,1}_{\{x_n\}} \cap \mathbb{D}_{\{x_n\}}$.
\end{enumerate}
\end{lem}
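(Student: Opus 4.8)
The plan is to build the required length-$r$ sequence as a directed walk in the reaction graph that stays pinned to the top S-type tier at every step, is routed so as to leave the top D-type tier exactly once, and is then padded to have length precisely $r$.

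First I would reduce to a convenient subsequence. There are only finitely many ordered sequences of at most $r$ reactions, and hence only finitely many bounded shift vectors $w$ of the form $\sum_{j}(y'_j-y_j)$ that can arise as partial sums. Since $w$ is fixed, $|x_n+w|\to\infty$, so by applying Lemma~\ref{lem21} finitely many times I may pass to a single subsequence---still written $\{x_n\}$---along which $\{x_n+w\}$ is a proper tier sequence for every admissible $w$. Passing to a subsequence does not alter the D- and S-type partitions of $\{x_n\}$ itself, as these are defined through limits, so \eqref{hypo1} keeps applying to $\{x_n\}$ and to each $\{x_n+w\}$. This already secures conclusion \ref{part:nothing2} for whatever $R$ I build. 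Crucially, Lemma~\ref{lem : Td1 stays same after jumps} then shows the top D-type tier is common to all these shifted sequences; writing $T:=T^{D,1}_{\{x_n\}}$, we have $T^{D,1}_{\{x_n+w\}}=T$ for every admissible $w$. Along each such proper tier sequence $\{z_n\}$, hypothesis \eqref{hypo1} gives $T^{S,1}_{\{z_n\}}\subseteq T$, and since $x_0$ is not absorbing Lemma~\ref{lem : minimum of intensities} gives $\inf_{\state}\overline\lambda>0$, so $z_n\in\state$ forces $T^{S,1}_{\{z_n\}}\neq\emptyset$.

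The heart of the argument is to locate an escape from $T$. Fix $y_1\in T^{S,1}_{\{x_n\}}\subseteq T$ and let $A\subseteq T$ be the set of complexes reachable from $y_1$ by directed paths using only reactions with both source and product in $T$. I claim some $a\in A$ admits a reaction $a\to a'$ with $a'\notin T$. If not, every outgoing reaction from a complex of $A$ lands in $T$ and hence, by reachability, in $A$; thus $A$ is closed under outgoing reactions, and no complex outside $A$ is reachable from $y_1$. But the network is weakly reversible with a single linkage class, so its reaction graph is strongly connected, while Lemma~\ref{lem: Tdi exists along tier sequence} produces a reaction crossing D-type tiers, whence $T\subsetneq\C$ and $\C\setminus A\supseteq\C\setminus T\neq\emptyset$ is reachable from $y_1$---a contradiction.

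Finally I assemble $R$. Take a simple directed path $y_1=c_0\to c_1\to\cdots\to c_p=a$ inside the subgraph induced on $T$ (so $p\le|\C|-1$), use $c_{i-1}\to c_i$ as reaction $i$ for $1\le i\le p$, and take $a\to a'$ as reaction $p+1$. Lemma~\ref{lem : Ts1 after jump} shows inductively that each product $c_i\in T$ becomes a top S-type source at the next point, so every source $c_0,\dots,c_{p-1},a$ lies in $T^{S,1}$ along its sequence; the escape occurs at step $\ell=p+1$, where $a'\notin T=T^{D,1}_{\{z_n(\ell)\}}$. Since $r=|\Re|\ge|\C|\ge p+1$, there remain $r-(p+1)$ steps to pad: at each remaining point the shifted sequence is proper, its top S-type tier is nonempty and contained in $T$, so I choose any top source there and any reaction out of it. Every source of $R$ then lies in $T^{S,1}\subseteq T$ along its sequence and some product leaves $T$, giving $R\in\mathbb{T}^{S,1}_{\{x_n\}}\cap\mathbb{D}_{\{x_n\}}$, i.e.\ \ref{part:good_sequence2}. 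I expect the escape step to be the main obstacle: the combinatorial no-trap argument (strong connectivity forbids a proper reachable set closed under outgoing reactions) is what guarantees a route out of $T$ that stays admissible, and it leans essentially on the single-linkage-class hypothesis and Lemma~\ref{lem: Tdi exists along tier sequence}, while Lemma~\ref{lem : Ts1 after jump} keeps the navigation pinned to top sources and Lemma~\ref{lem : Td1 stays same after jumps} lets me treat $T$ as fixed throughout and pad freely after the escape.
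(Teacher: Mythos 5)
Your overall route is essentially the paper's: start from some $y_1\in T^{S,1}_{\{x_n\}}$ (nonempty since $x_0$ is not absorbing, via \lemref{lem : minimum of intensities}), use strong connectivity together with \lemref{lem: Tdi exists along tier sequence} to find a chain of reactions that exits the top D-type tier, keep every source pinned in $T^{S,1}$ by the induction with \lemref{lem : Ts1 after jump} and in $T^{D,1}$ by \eqref{hypo1}, then pad with arbitrary top-S-tier reactions up to length $r$. Your closure argument for the escape (a set $A\subseteq T$ reachable from $y_1$ and closed under outgoing reactions would contradict strong connectivity, since $T\subsetneq \C$) is a mild variant of the paper's more direct step: take any directed path from $y^\star$ to some $y^{\star\star}\notin T^{D,1}_{\{x_n\}}$ and truncate it at the first exit from the top D-type tier.

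There is, however, a genuine gap in your opening reduction. You apply \lemref{lem21} to $\{x_n+w\}$ for \emph{every} partial-sum shift $w$ coming from \emph{every} ordered tuple of at most $r$ reactions, and conclude that a single subsequence makes all of these proper tier sequences, so that conclusion \ref{part:nothing2} holds ``for whatever $R$ I build.'' But a proper tier sequence must lie in $\state$ by Definition~\ref{def_tier-seqence}, and \lemref{lem21} is only stated for sequences contained in $\state$; for an arbitrary shift $w$ the points $x_n+w$ need not be in $\state$, and need not even lie in $\Z^d_{\geq 0}$ (e.g.\ the shift of a reaction $A+C\to C$ applied at states with no copies of $A$ gives a negative coordinate). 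So the blanket extraction cannot be performed, and conclusion \ref{part:nothing2} is not automatic: it holds precisely because the chain you eventually build is fireable --- each source lies in the top S-type tier at the corresponding shifted points, hence each jump has positive rate and membership in $\state$ propagates step by step. The repair is exactly the paper's bookkeeping: interleave the subsequence extractions with the construction, i.e.\ each time you append a reaction whose source is in the current top S-type tier, note that the new shifted points are in $\state$, and only then invoke \lemref{lem21} to make that shifted sequence proper (which is also what licenses your uses of \lemref{lem : Td1 stays same after jumps} to treat $T$ as fixed). With that reordering your argument goes through; as written, the first paragraph proves something false and the rest of the proof silently relies on the fireability it ignores.
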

\begin{proof}
Since $x_0$ is not absorbing and the network is weakly reversible, it follows from Lemma~\ref{lem : minimum of intensities} that for any $x\in \state$ there exists a reaction $y\to y'$ with $\lambda_{y\to y'}(x)>0$, hence $T^{S,1}_{\{x_n\}}\neq \emptyset$. Let $y^\star\in T^{S,1}_{\{x_n\}}$, which by assumption is contained in $\Td1$. Moreover, by \lemref{lem: Tdi exists along tier sequence} there exists $y^{\star\star}\in \C\setminus \Td1$. Since the network $(\S, \C, \Re)$ is weakly reversible and $y^\star$ and $y^{\star\star}$ are in the same linkage class, there exists a sequence of reactions $\{y_1\to y_1',\dots,y_H\to y'_H\}$ with $y_1=y^\star\in T^{S,1}_{\{x_n\}}$, $y'_j=y_{j+1}$ for all $1\leq j<H$, and $y'_H=y^{\star\star}\notin\Td1$.
It follows from \lemref{lem21} that by potentially considering a subsequence of $\{x_n\}$, we may assume that $\{x_n+\sum_{j=1}^{m}(y'_j-y_j)\}$ is a proper tier sequence of $X$ for each $1\leq m\leq H$. It follows from \lemref{lem : Td1 stays same after jumps} that $y'_H\notin T^{D,1}_{\{x_n+\sum_{j=1}^{H-1} (y'_j-y_j)\}}$. Let $h$ be the smallest index such that $y'_h\notin T^{D,1}_{\{x_n+\sum_{j=1}^{h-1} (y'_j-y_j)\}}$. Note that $1\leq h\leq H\leq r$. Define
$$\tilde R=\{y_1\to y_1',\dots,y_h\to y_h'\}.$$
By \lemref{lem : Ts1 after jump}, for each $2\leq m\leq h$
$$y_m=y'_{m-1}\in T^{S,1}_{\{x_n+\sum_{j=1}^{m-1} (y'_j-y_j)\}},$$
which implies that $\tilde R\in\mathbb{T}^{S,1}_{\{x_n\}}$. Moreover, $\tilde R\in\mathbb{D}_{\{x_n\}}$ by \eqref{hypo1} and by definition of $h$.

If $h=r$, then the the proof is complete by choosing $R=\tilde R$.

If $h<r$, then we may recursively add reactions to $\tilde R$ as follow: since there are finitely many reactions and there is no absorbing state in $\state$, by potentially considering a subsequence of $\{x_n\}$ we may assume that there exists a reaction $\hat y_{h+1}\to \hat y'_{h+1}$ such that 
$$\lambda_{\hat y_{h+1}\to \hat y'_{h+1}}\left(x_n+\sum_{j=1}^{h} (y'_j-y_j)\right)=\max_{y\to y'\in\Re}\lambda_{y\to y'}\left(x_n+\sum_{j=1}^{h} (y'_j-y_j)\right)>0.$$
It follows that $\hat y_{h+1}\in T^{S,1}_{\{x_n+\sum_{j=1}^{h} (y'_j-y_j)\}}$, hence by \eqref{hypo1}
$$\{y_1\to y_1',\dots,y_h\to y_h', \hat y_{h+1}\to \hat y'_{h+1}\}\in \mathbb{T}^{S,1}_{\{x_n\}} \cap \mathbb{D}_{\{x_n\}}.$$
Moreover, by Lemma~\ref{lem21}, there exists a subsequence of $\{x_n\}$ such that
$$\left\{x_n+\sum_{j=1}^{h} (y'_j-y_j)+ \hat y'_{h+1}-\hat y_{h+1}\right\}$$
is a proper tier sequence. If $h+1=r$, the proof is complete. 
Otherwise, the argument can be iterated with the new set of reactions, until the cardinality of the sequence of interest is $r$. This concludes the proof.
\end{proof}

We are now ready to prove Theorem~\ref{thm:main2}.

\begin{proof}[Proof of Theorem~\ref{thm:main2}]
 If $x_0$ is an absorbing state, then $X$ is positive recurrent and the proof is complete. If $x_0$ is not absorbing, then \lemref{lem:existence of a set of source in D and Ts1 } can be applied. Therefore, the proof is concluded by \lemref{thm : main analytic thm}, by choosing $k=r$.
\end{proof}

\subsection{Structural network conditions}\label{subsec2}

In this section, we prove Theorem~\ref{thm:main}. We do so by showing that under the structural assumptions in the statement of Theorem~\ref{thm:main}, Theorem~\ref{thm:main2} can be applied. 

\begin{proof}[Proof of Theorem~\ref{thm:main}]
 If $x_0$ is an absorbing state, then $X$ is positive recurrent and the proof is concluded. Now assume that $x_0$ is not absorbing. In order to apply Theorem~\ref{thm:main2} and conclude that $X$ is positive recurrent, we only need to show that under the assumptions of Theorem~\ref{thm:main}, \eqref{hypo1} holds. By Lemma~\ref{lem : minimum of intensities}, for any $x\in \state$ there exists a reaction $y\to y'$ with $\lambda_{y\to y'}(x)>0$, hence $T^{S,1}_{\{x_n\}}\neq \emptyset$ for any proper tier sequence $\{x_n\}$. Fix any $y^\star\in \Ts1$, $y^{\star\star}\in\Td1$. We will conclude the proof by showing that necessarily
 \begin{equation}\label{eq:dkkjgsdjfhskfhsk}
     y^\star  \sim_D y^{\star\star},
 \end{equation}
 which in turn implies $y^\star\in \Td1$ and hence \eqref{hypo1}. We consider two cases: first, assume that $y^{\star\star}\notin \Tsinf$. We have
 \begin{equation}\label{eq:showing y and y tilde are in same tier}
 \frac{(x_n\vee 1)^{y^{\star\star}}}{(x_n \vee 1)^{y^\star}} = 
 \frac{(x_n\vee 1)^{y^{\star\star}}}{\lambda_{y^{\star\star}}(x_n)}\cdot
 \frac{\lambda_{y^{\star\star}}(x_n)}{\lambda_{y^\star}(x_n)}\cdot
 \frac{\lambda_{y^\star}(x_n)}{(x_n\vee 1)^{y^\star}}.
\end{equation}
By \cite[Corollary 7]{AK2018}, we have $y^{\star\star}\in \Ts1$. Hence the second term in the right-hand side of \eqref{eq:showing y and y tilde are in same tier} tends to a positive constant as $n\to\infty$, and by \cite[Lemma 6]{AK2018} the first and the third terms in the right-hand side of \eqref{eq:showing y and y tilde are in same tier} tend to 1 as $n\to\infty$. Hence, the quantity \eqref{eq:showing y and y tilde are in same tier} tends to a positive constant as $n\to\infty$, which implies \eqref{eq:dkkjgsdjfhskfhsk}.

Now assume that $y^{\star\star}\in \Tsinf$, meaning that \begin{equation}\label{eq:zero}
 \lambda_{y^{\star\star}}(x_n)=0\quad\text{for all }n\geq1.
\end{equation}
Since $y^{\star\star}\in \Td1$, by \cite[Lemma 5]{AK2018}
\begin{equation}\label{eq:infinity}
    \lim_{n\to\infty} (x_n\vee 1)^{y^{\star\star}}=\infty.
\end{equation}
Since by assumption the network is binary, the only possibility for both \eqref{eq:zero} and \eqref{eq:infinity} to hold is that there exist $1\leq u,v\leq d$  with $u\neq v$ such that
\begin{enumerate*}[label=(\emph{\alph*}), before=\unskip{: }, itemjoin={{; }}, itemjoin*={{, and }}]
    \item $y^{\star\star}_u=y^{\star\star}_v=1$ and $y^{\star\star}_i=0$ for all $1\leq i\leq d$ with $i\notin\{ u,v\}$
    \item $x_{nu}=0$ for all $n\geq 1$
    \item $\lim_{n\to\infty} x_{nv}=\infty$
\end{enumerate*}.
By assumption, either $\tilde y=S_v \in \C$ or $\hat y=2S_v\in \C$, or both inclusions hold. If $\hat y$ were in $\C$, then we would have
$$\lim_{n\to \infty}\frac{(x_n\vee 1)^{y^{\star\star}}}{(x_n\vee 1)^{\hat y}}=\lim_{n\to \infty}\frac{x_{nv}}{x_{nv}^2}=0,$$
which would imply $\hat y\succ_D y^{\star\star}$. This is in contradiction with the assumption that $y^{\star\star}\in\Td1$, hence $\hat y\notin \C$ and $\tilde y\in\C$. Note that in this case $(x_n\vee 1)^{y^{\star\star}}=x_{nv}=\lambda_{\tilde y}(x_n)$ for all $n\geq1$. Hence,
$$\lim_{n\to\infty} \frac{(x_n\vee 1)^{y^{\star\star}}}{(x_n\vee 1)^{y^\star}}=\lim_{n\to\infty}\frac{\lambda_{\tilde y}(x_n)}{\lambda_{y^\star}(x_n)}\cdot \frac{\lambda_{y^\star}(x_n)}{(x_n\vee 1)^{y^\star}}<\infty,$$
because $y^\star\in\Ts1$ and the second term in the right-hand side limit tends to 1 by \cite[Lemma 6]{AK2018}. Then, either $y^\star\succ_D y^{\star\star}$ or \eqref{eq:dkkjgsdjfhskfhsk} holds. Since $y^{\star\star}\in\Td1$, the former cannot hold and the proof is concluded.
\end{proof}


	\section{Acknowledgements}
	We gratefully acknowledge the American Institute of Mathematics (AIM) for hosting our SQuaRE entitled ``Dynamical Properties of Deterministic and Stochastic Models of Reaction Networks'' where this work was initiated.
	
	DA was supported by Army Research Office grant  W911NF-18-1-0324.
	
	DC received funding from the European Research Council (ERC) under the European Union’s Horizon 2020 research and innovation programme grant agreement no. 743269 (CyberGenetics project).
	 \bibliographystyle{plain}
\bibliography{res}
	
\end{document}